\long\def\symbolfootnote[#1]#2{\begingroup%
\def\thefootnote{\fnsymbol{footnote}}\footnote[#1]{#2}\endgroup}
\newcommand{\C}{\ensuremath{\mathfrak{C}}}
\def\imod#1{\allowbreak\mkern10mu({\operator@font mod}\,\,#1)}
\newtheorem{theorem}{Theorem}[section]
\newtheorem{lemma}[theorem]{Lemma}
\newtheorem{corollary}[theorem]{Corollary}
\newtheorem{proposition}[theorem]{Proposition}
\newtheorem*{theorem*}{Theorem}
\theoremstyle{definition}
\newtheorem{remark}[theorem]{Remark}
\newtheorem{question}[theorem]{Question}
\newtheorem{example}[theorem]{Example}
\numberwithin{equation}{section}
\newcommand{\ignore}[1]{}
\newcommand{\mynote}[1]{}
\begin{document}
	
	\title{Powers in the wreath product of $G$ with $S_n$}
	\author{Rijubrata Kundu}
	\address{IISER Pune, Dr. Homi Bhabha Road, Pashan, Pune 411 008 India}
	\email{rijubrata8@gmail.com}
	\author{Sudipa Mondal}
	\email{sudipa.mondal123@gmail.com}
	\thanks{The first named author would like to acknowledge the support of NBHM PhD fellowship during this work. The second named author would like to acknowledge the support of CSIR PhD fellowship during this work.}
	\subjclass[2010]{20B05, 20B30, 05A15, 20P05}
	\today 
	\keywords{$G\wr S_n$, generating functions, power map}
	\begin{abstract}
		In this paper we compute powers in the wreath product $G\wr S_n$, for any finite group $G$. For $r\geq 2$, a prime, consider $\omega_r: G\wr S_n\to G\wr S_n$ defined by $g \mapsto g^r$. Let $P_{r}(G\wr S_n)=\frac{|\omega_r(G\wr S_n)|}{|G|^n n!}$, be the probability that a randomly chosen element in $G\wr S_n$ is a $r^{th}$ power. We prove, $P_r(G\wr S_{n+1})=P_r(G\wr S_n)$ for all $n\not \equiv -1(\text{mod } r)$ if, order of $G$ is coprime to $r$. We also give a formula for the number of conjugacy classes that are $r^{th}$ powers in $G\wr S_n$.
	\end{abstract}
	
	\maketitle
	\section{Introduction}
	
	This article deals with computing $r^{th}$ powers in the wreath product $G\wr S_n$, where $G$ is a finite group and $r\geq 2$ is an integer. These groups, in some form or the other, occur as natural subgroups of the symmetric group such as centralizers of elements, normalizers of certain subgroups and, Sylow subgroups, and as such play a important role in the theory of permutation groups. The group $C_m\wr S_n$ is called a generalised symmetric group, where $C_m$ is the cyclic group of order $m$. In particular the groups $C_2\wr S_n$ are called hyper-octahedral groups. These groups are also examples of Weyl groups. The Weyl groups of type $\mathcal{A}_n, \mathcal{B}_n,\mathcal{C}_n,\mathcal{D}_n$ are all wreath products namely, $S_{n+1}$, $C_2\wr S_n$, $C_2\wr S_n$\ and, $C_2^{n-1}\wr S_n$ respectively. This investigation of powers in the group $G\wr S_n$ is motivated from the study of powers in $S_n$, in which substantial amount of information is available. When $G$ is trivial, $G\wr S_n$ is $S_n$ itself, and therefore our results in this article generalizes some of the known results obtained for the powers in $S_n$, for example in, \cite{bl}, \cite{bmw} where the authors have studied the proportion of $r^{th}$ powers in $S_n$.
	
	Let $r\geq 2$ be any integer. The power map, $\omega_r:G\wr S_n\to G\wr S_n$ is defined by $\omega_r(g)=g^r$, for every $g\in G \wr S_n$. The elements in the image $\omega_r(G\wr S_n)=\{g^r \mid g \in G\wr S_n\}$ are called $r^{th}$ powers in $G\wr S_n$. Let $P_r(G\wr S_n):=\frac{|\omega_r(G\wr S_n)|}{|G|^nn!}$ denote the probability that a randomly chosen element of $G\wr S_n$ is a $r^{th}$ power. In this article we give characterization of $r^{th}$ powers in $G\wr S_n$, assuming $r$ is a prime, and thus obtain generating function for $P_r(G\wr S_n)$. This allows us to prove a relation that is satisfied by $P_r(G\wr S_n)$, which is one of our main results. The set of $r^{th}$ powers $\omega_r(G\wr S_n)$ is closed under conjugation, and as such is a union of conjugacy classes of $G\wr S_n$. A conjugacy class of $G\wr S_n$ contained in $\omega_r(G\wr S_n)$ is called a $r^{th}$ power conjugacy class. We count the number of conjugacy classes in $G\wr S_n$ which are $r^{th}$ powers. As stated earlier, the results proved here, brings the results obtained for the symmetric group to a more general context. It is worth mentioning that the power map along these lines has been studied for the group $\text{GL}(n,q)$, which is the group of all invertible matrices over the finite field with $q$ elements, by the first author and A. Singh in \cite{ka}.
	
	The paper is organized as follows: Section 2 collects some of the important results on powers in the symmetric group. This section not only serves as a short survey on the topic, but also allows us to point out the specific generalizations that we obtain for the powers in $G\wr S_n$. In section 3, we discuss the notion of cycle index in $G\wr S_n$, which once again is a generalization of P\'olya's cycle index in $S_n$. The notion of cycle index will play a major role in this paper. Section 4 is the main technical section where we compute the powers in $G\wr S_n$ and characterize them, which is Proposition~\ref{power-in-wreath}. In Section 5, we write generating function for the proportion of powers in $G\wr S_n$ (see Proposition~\ref{mainprobgen}), and prove one of the main results which is Theorem~\ref{mainth}. A formula for the number of $r^{th}$ power conjugacy classes is given in Theorem~\ref{powerconjformula}. We end this paper with a short discussion on the asymptotics  of the powers in $G\wr S_n$.
	
	\subsection*{Acknowledgment}
	We express our gratitude to Prof. Anupam Singh for carefully reading the paper and suggesting various improvements in the exposition of this article.
	
	\section{Powers in Symmetric Group}
	
	In this section we collect some well known results on powers in symmetric groups. Let $r\geq 2$ be an integer. Consider the power map $\omega_r:S_n \to S_n$ defined by $\omega_r(\pi)=\pi^r$, for all $\pi \in S_n$. The image of this map, $\omega_r(S_n)=\{\pi^r\mid \pi \in S_n\}$, is the set of $r^{th}$ powers in $S_n$. Let $P_r(S_n):=\frac{|\omega_r(S_n)|}{n!}$, which is the probability that a randomly chosen element in $S_n$ is a $r^{th}$ power.
		
	For a natural number $n$, $\lambda=(\lambda_1,\lambda_2, \ldots)$ where $\lambda_i\geq 0$ for all $i$ and $\lambda_1\geq \lambda_2\geq \ldots$, is called a partition of $n$, if $\sum\limits_{i}\lambda_i=n$. We denote a partition $\lambda$ of $n$ by, $\lambda \vdash n$. The positive integers $\lambda_i$ are called the parts of $\lambda$. In power notation, we write $\lambda=1^{m_1}2^{m_2}\ldots i^{m_i}\ldots$, with   $\sum\limits_{i}im_i=n$, where $m_i$ denotes the number of times $i$ occurs as a part in $\lambda$.
	The conjugacy classes in the group $S_n$ are given in terms of the cycle structure of permutations. Any permutation $\pi \in S_n$ can be written as a product of disjoint cycles, thus we define $\text{type}(\pi)=(c_1,c_2,\ldots)$, where $c_i$ denotes the number of $i$-cycles in $\pi$. It satisfies the relation  $\sum\limits_{i}ic_i=n$, and thus actually defines a partition of $n$, which in power notation is $1^{c_1}2^{c_2}\ldots \vdash n$. It is well known that two permutations $\pi, \tau \in S_n$ are conjugate if and only if $\text{type}(\pi)=\text{type}(\tau)$.  Thus, conjugacy classes in $S_n$ are in one-one correspondence with partitions of $n$.\\
	
	The $r^{th}$ powers in $S_n$ can be computed easily, if we assume that $r$ is a prime. The following lemma which can be found in \cite{bmw}, will also be useful for our computation in the wreath product. We include a proof since it will be required later.
	
	\begin{lemma}[Proposition 3.1, \cite{bmw}]\label{power-in-Sn}
	Let $r\geq 2$ be a prime. Let $\pi \in S_n$ be of $\mathrm{type}$ $(c_1,c_2,\ldots)$. Then, $\mathrm{type}(\pi^r)=(d_1,d_2,\ldots)$, where 
		\[d_i=
		\begin{cases}
		c_i+rc_{ri} & \text{ when } r\nmid i \\
		rc_{ri} & \text{ otherwise}.
		\end{cases}
		\]	
	\end{lemma}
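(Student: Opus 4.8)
The plan is to first reduce to the case where $\pi$ is a single cycle, and then count cycles. Since disjoint cycles commute, if $\pi=\sigma_1\sigma_2\cdots$ is the disjoint cycle decomposition of $\pi$, then $\pi^r=\sigma_1^r\sigma_2^r\cdots$, and the cycles occurring in the various $\sigma_j^r$ are still pairwise disjoint; hence it suffices to understand $\sigma^r$ for a single $k$-cycle $\sigma$. Identifying the support of $\sigma$ with $\mathbb{Z}/k\mathbb{Z}$ so that $\sigma$ becomes the translation $x\mapsto x+1$, the permutation $\sigma^r$ is the translation $x\mapsto x+r$, whose orbits are exactly the cosets of the cyclic subgroup $\langle r\rangle\leq \mathbb{Z}/k\mathbb{Z}$. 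There are $\gcd(k,r)$ such cosets, each of size $k/\gcd(k,r)$, so $\sigma^r$ is a product of $\gcd(k,r)$ disjoint cycles, each of length $k/\gcd(k,r)$.

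Now I would invoke the hypothesis that $r$ is prime: $\gcd(k,r)=r$ if $r\mid k$, and $\gcd(k,r)=1$ otherwise. Consequently a $k$-cycle with $r\nmid k$ is sent by $\omega_r$ to a single $k$-cycle, while a $k$-cycle with $r\mid k$ is sent to $r$ disjoint cycles of length $k/r$. To read off $d_i$, the number of $i$-cycles of $\pi^r$, I would determine which cycles of $\pi$ can contribute an $i$-cycle to $\pi^r$: either an $i$-cycle of $\pi$ with $r\nmid i$ (each contributing exactly one, hence $c_i$ of them in total in that case), or an $ri$-cycle of $\pi$ (each contributing $r$ cycles of length $i$, hence $rc_{ri}$ in total, with no constraint on $i$). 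Adding these contributions: when $r\nmid i$ we get $d_i=c_i+rc_{ri}$, and when $r\mid i$ the first source is impossible and $d_i=rc_{ri}$, which is exactly the claimed formula.

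The argument is short, and the only point requiring care is the combinatorial bookkeeping in the last step: one must verify that no cycle length of $\pi$ other than $i$ (when $r\nmid i$) and $ri$ can produce an $i$-cycle in $\pi^r$. This follows at once from the first paragraph, and it is precisely here that primality of $r$ is used --- for composite $r$ a $k$-cycle can split into $\gcd(k,r)$ pieces for several intermediate divisor values, and the clean two-case description would break down.
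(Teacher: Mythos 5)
Your proof is correct and takes essentially the same route as the paper: analyze the $r^{th}$ power of a single $k$-cycle (it splits into $\gcd(k,r)$ cycles, which by primality of $r$ is $1$ or $r$), and then count which cycles of $\pi$ can contribute an $i$-cycle to $\pi^r$, yielding the two cases $d_i=c_i+rc_{ri}$ and $d_i=rc_{ri}$. The only difference is that you justify the single-cycle splitting explicitly via the coset argument in $\mathbb{Z}/k\mathbb{Z}$, whereas the paper simply states that step as easy to observe.
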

	
	\begin{proof}
		Suppose $\alpha$ is a $k$-cycle, and $r \nmid k$. Then, it is easy to observe that $\alpha^r$ is a $k$-cycle. If $r\mid k$, say $k=ar$, then once again it is easy to see that $\alpha^r$ decomposes as a product of $r$ number of $a$-cycles. Assume now that $\pi\in S_n$ has type $(c_1,c_2,\ldots)$ and, $\pi^r$ has type $(d_1,d_2,\ldots)$. Let $1\leq i\leq n$ be such that $r\mid i$. Since, $d_i$ denotes the number of $i$-cycles in $\pi^r$, $c_{ri}$ denotes the number of $ri$-cycles in $\pi$, and the fact that each $ri$-cycle when raised to the power of $r$ gives $r$ number of $i$-cycles, it is clear that $d_i=rc_{ri}$. Observe that in the above case, the $ri$-cycles are the only ones, which contribute to the number of $i$-cycles in $\pi^r$. Now, assume that $r\nmid i$. Now, in this case, apart from the $ri$-cycles, which once again contributes $r$ number of $i$-cycles each when raised to the power $r$, the $i$-cycles itself in $\pi$, remain $i$-cycles in $\pi^r$. There are $c_i$ number of $i$-cycles in $\pi$, thus, if $r\nmid i$, $d_i=c_i+rc_{ri}$. This completes the proof.
	\end{proof}

	\noindent Using the above lemma one can easily characterize the permutations which are $r^{th}$ powers. The following proposition can be found once again in \cite{bmw}. We state it without proof.
	
	\begin{proposition}\label{power-Sn}
		Let $r\geq 2$ be a prime. Let $\pi \in S_n$ and $\mathrm{type}(\pi)$ is given by a partition $\lambda=1^{m_1}2^{m_2}\ldots i^{m_i}\ldots \vdash n$. Then $\pi \in \omega_r(S_n)$ if and only if $r \mid m_i$ whenever $r|i$.
	\end{proposition}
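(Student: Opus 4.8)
The plan is to read off both directions from Lemma~\ref{power-in-Sn}, together with one explicit construction of an $r$-th root. For the forward implication, suppose $\pi = \sigma^{r}$ for some $\sigma \in S_n$ of type $(c_1,c_2,\ldots)$. By Lemma~\ref{power-in-Sn}, $\mathrm{type}(\pi) = (d_1,d_2,\ldots)$ with $d_i = r c_{ri}$ whenever $r \mid i$. Comparing with $\mathrm{type}(\pi) = (m_1,m_2,\ldots)$, this says $m_i = r c_{ri}$ for every $i$ divisible by $r$, so $r \mid m_i$ in precisely those cases, which is the asserted condition.

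For the converse, assume $r \mid m_i$ whenever $r \mid i$; I will exhibit $\sigma$ with $\sigma^{r} = \pi$, working cycle by cycle in a disjoint-cycle decomposition of $\pi$ and splitting according to the residue of the cycle length modulo $r$. If $r \nmid i$, fix $s_i$ with $s_i r \equiv 1 \pmod{i}$ (possible since $\gcd(r,i) = 1$) and, for each $i$-cycle $\alpha$ of $\pi$, put $\alpha^{s_i}$ into $\sigma$; as $\alpha$ has order $i$ this gives $(\alpha^{s_i})^{r} = \alpha$. If $r \mid i$, use $r \mid m_i$ to partition the $m_i$ cycles of length $i$ in $\pi$ into $m_i/r$ blocks of $r$ cycles; given such a block $\alpha_1,\ldots,\alpha_r$ with $\alpha_k = (a_{k,0},a_{k,1},\ldots,a_{k,i-1})$, let $\delta$ be the $ri$-cycle that interleaves them, $\delta = (a_{1,0},a_{2,0},\ldots,a_{r,0},a_{1,1},\ldots,a_{r,1},\ldots,a_{1,i-1},\ldots,a_{r,i-1})$, and put $\delta$ into $\sigma$. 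A direct check on where $\delta$, and hence $\delta^{r}$, sends each $a_{k,j}$ shows $\delta^{r} = \alpha_1\cdots\alpha_r$. The resulting $\sigma$, a product of pairwise disjoint cycles, then satisfies $\sigma^{r} = \pi$, so $\pi \in \omega_r(S_n)$.

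The only step that calls for genuine care is verifying $\delta^{r} = \alpha_1\cdots\alpha_r$ in the case $r \mid i$; everything else is immediate from Lemma~\ref{power-in-Sn} and elementary arithmetic of cycles. One could bypass the explicit $\delta$ entirely---since $\omega_r(S_n)$ is a union of conjugacy classes it is enough to produce any $r$-th power of cycle type $\lambda$---but the construction costs nothing extra and returns a root of $\pi$ itself. This bookkeeping (cycles of length prime to $r$ carry roots of the same length, while $r$ cycles of a common length fuse into a single cycle $r$ times as long) is exactly the combinatorial core that reappears, with $G$-decorations attached, in the wreath-product statement Proposition~\ref{power-in-wreath}.
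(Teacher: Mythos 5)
Your proof is correct, but it is worth noting that the paper itself offers no argument here: it states this proposition without proof, citing \cite{bmw}, so the only internal point of comparison is the proof of the wreath-product generalization, Proposition~\ref{power-in-wreath}. There the authors argue at the level of types: they build a candidate type $T^{'}$, check that $\sum_{i,j} jT^{'}_{ij}=n$ so that some element $h$ of that type exists, apply Lemma~\ref{preplem3} to see $T(h^r)=T(g)$, and finish by conjugating, since the set of $r$-th powers is a union of conjugacy classes. Your converse is more explicit: you produce an actual $r$-th root of $\pi$ itself, taking $\alpha^{s_i}$ with $s_ir\equiv 1\pmod i$ on cycles of length prime to $r$ and interleaving each block of $r$ cycles of length $i$ (with $r\mid i$, using $r\mid m_i$) into a single $ri$-cycle $\delta$ with $\delta^r=\alpha_1\cdots\alpha_r$; the verification of this last identity is routine and your forward direction is an immediate read-off from Lemma~\ref{power-in-Sn}, exactly as in the paper's framework. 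What the explicit construction buys is a concrete root and independence from the conjugacy-class classification; what the type-counting route (which you mention as a shortcut) buys is brevity and the fact that it scales directly to $G\wr S_n$, where one must also track cycle products and where the paper indeed argues only up to conjugacy rather than constructing a root of the given element.
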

	
	\noindent In \cite{bmw}, the authors showed that the probability of $r^{th}$ powers  in $S_n$, that is, $P_r(S_n)$ satisfies the following,
	\begin{theorem}[\cite{bmw}, Theorem 3.4 ]\label{1}
		Let $r\geq 2$ be a prime. Then, $P_r(S_n)=P_r(S_{n+1})$, for all $n\geq 0$ and $n+1\not\equiv 0 (\text{mod }r)$. Thus, $|\omega_r(S_{n+1})|=(n+1)|\omega_r(S_n)|$, when $n\not\equiv -1(\text{mod }r)$.
	\end{theorem}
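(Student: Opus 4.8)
The plan is to rewrite $|\omega_r(S_n)|$ as a sum over cycle types, assemble the numbers $P_r(S_n)$ into a single generating function, simplify that function just far enough to expose a factor that is a power series in $x^r$, and then read the claimed periodicity off the coefficients.

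First I would invoke Proposition~\ref{power-Sn}: a permutation with cycle type $\lambda = 1^{m_1}2^{m_2}\cdots \vdash n$ is an $r^{th}$ power precisely when $r \mid m_i$ for every $i$ with $r \mid i$; call such a $\lambda$ \emph{admissible}. Since the conjugacy class of type $\lambda$ has $n!/z_\lambda$ elements with $z_\lambda = \prod_i i^{m_i}m_i!$, dividing by $n!$ gives $P_r(S_n) = \sum_{\lambda \vdash n \text{ admissible}} 1/z_\lambda$. Writing $x^{|\lambda|}/z_\lambda = \prod_i x^{im_i}/(i^{m_i}m_i!)$ and summing each multiplicity $m_i$ independently (unrestricted when $r \nmid i$, restricted to multiples of $r$ when $r \mid i$), the generating function $F_r(x) := \sum_{n\ge 0}P_r(S_n)x^n$ factors over part sizes as
\[
F_r(x) = \Big(\prod_{r\,\nmid\, i}\exp(x^i/i)\Big)\cdot\Big(\prod_{r\,\mid\, i}\ \sum_{r\,\mid\, m}\frac{x^{im}}{i^m m!}\Big),
\]
an identity in $\Q[[x]]$ (for each fixed $n$ only finitely many terms contribute, so the infinite product makes sense formally).

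Next I would simplify. From $\sum_{i\ge 1}x^i/i = -\log(1-x)$ and $\sum_{r\mid i}x^i/i = -\tfrac{1}{r}\log(1-x^r)$ one obtains $\prod_{r\,\nmid\, i}\exp(x^i/i) = (1-x^r)^{1/r}/(1-x)$. In the second factor every monomial $x^{im}$ has $r\mid i$, hence is a power of $x^r$; therefore $G(x) := \prod_{r\,\mid\, i}\sum_{r\,\mid\, m}x^{im}/(i^m m!)$ lies in $\Q[[x^r]]$, and since $(1-x^r)^{1/r}$ (via the binomial series) also lies in $\Q[[x^r]]$, we conclude that $(1-x)F_r(x) = (1-x^r)^{1/r}\,G(x) = \sum_{k\ge 0}h_k x^{rk}$ for some $h_k\in\Q$. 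Comparing coefficients of $x^n$ on both sides (with the convention $P_r(S_{-1})=0$) shows $P_r(S_n)-P_r(S_{n-1})$ equals $h_{n/r}$ when $r\mid n$ and $0$ otherwise. Hence $P_r(S_{n+1})=P_r(S_n)$ whenever $n+1\not\equiv 0\pmod r$, and multiplying through by $(n+1)!$ gives $|\omega_r(S_{n+1})| = (n+1)!\,P_r(S_{n+1}) = (n+1)!\,P_r(S_n) = (n+1)\,|\omega_r(S_n)|$.

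There is no serious obstacle here; the one place demanding care is the factorization of $F_r(x)$ over part sizes together with the manipulations involving $\log$ and the binomial series, all of which should be stated cleanly as identities of formal power series in $\Q[[x]]$. The observation that keeps the proof short is that $G(x)$ itself never needs to be evaluated — only its membership in $\Q[[x^r]]$ is used. (The same strategy will carry over to $G\wr S_n$ once the cycle index discussed in Section~3 replaces $z_\lambda$, which is presumably why this lemma is isolated at this point.)
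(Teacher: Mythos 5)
Your proof is correct, but it is worth noting that the paper itself offers no proof of this theorem: it is quoted from \cite{bmw}, where it is established by a bijective argument, and the authors explicitly say so. What you have written is instead the generating-function route, and it coincides, in the special case $G=\{1\}$, with the paper's own proof of its generalization, Theorem~\ref{mainth}: your factorization of $F_r(x)$ over part sizes is Corollary~\ref{probgen-coprime} with $s=1$, $\alpha_1=1$, your identity $\prod_{r\nmid i}\exp(x^i/i)=(1-x^r)^{1/r}/(1-x)$ is the computation inside Proposition~\ref{mainprobgen}, and your key observation that $(1-x)F_r(x)\in\Q[[x^r]]$ is equivalent to the paper's Lemma~\ref{mainlemma-genfunction}, since writing
\begin{equation*}
F_r(x)=\frac{1}{1-x}\,H(x^r)=(1+x+\cdots+x^{r-1})\,\frac{H(x^r)}{1-x^r}
\end{equation*}
exhibits $F_r$ as $(1+x+\cdots+x^{r-1})f(x)$ with $f(x)=f(\omega x)$ for $\omega$ a nontrivial $r$th root of unity. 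So compared with the cited bijective proof, your argument buys exactly what the paper's Section~5 buys: it needs no explicit bijection, only the class-wise description of $\omega_r(S_n)$ from Proposition~\ref{power-Sn} and formal power series bookkeeping, and it transfers verbatim to $G\wr S_n$ once $z_\lambda$ is replaced by the centralizer orders of Proposition~\ref{sizecentra} (which is precisely how the paper proceeds); the bijective method, by contrast, gives a more combinatorially explicit correspondence but does not generalize as mechanically. Your manipulations (admissible types, the splitting of the product according to $r\mid i$ versus $r\nmid i$, the membership of the restricted factor and of $(1-x^r)^{1/r}$ in $\Q[[x^r]]$, and the coefficient comparison with $P_r(S_{-1})=0$) are all sound, and the final step recovering $|\omega_r(S_{n+1})|=(n+1)|\omega_r(S_n)|$ is immediate.
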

	
	\begin{example}
		Observe that Theorem~\ref{1}, doesn't hold if we take $r$ to be composite. Let $r=6$. We have $P_6(S_4)=\frac{1}{6}\neq \frac{1}{3}=P_6(S_5)$.
	\end{example}
	
	\noindent The authors in \cite{bmw}  proved this result using bijective methods. They further investigated $P_r(S_n)$ as a sequence in $n$, and proved that it is monotonically decreasing in $n$ and $\lim\limits_{n\to \infty}P_r(S_n)=0$. In other words, powers in the symmetric groups are really scarce, for large $n$.\\
	
	The special case of $r=2$ was investigated by J. Blum in \cite{bl}, using generating functions. The generating function for the powers in $S_n$ can be derived by using Polya's cycle index of $S_n$. We briefly describe the notion of cycle index of $S_n$ here, for we will see in the next section that this idea can also be generalized for $G\wr S_n$.
	
	\noindent The cycle index of $S_n$ is defined as
	\begin{equation}\label{cycleindex-S_n}
	Z_n=Z_n(t_1,t_2,\ldots t_n;S_n)=\frac{1}{n!}\sum\limits_{\substack{\pi \in S_n \\ \mathrm{type}(\pi)\\=(c_1,c_2,\ldots )}}t_1^{c_1}t_2^{c_2}\ldots t_i^{c_i}\ldots
	\end{equation}
	
	\noindent Observe that the product inside is a finite product, since there exists $m\in \mathbb{N}$, such that $c_i=0$ for all $i\geq m$. The coefficient of the monomial $t_1^{a_1}t_2^{a_2}\ldots $, where $\sum\limits_{i}ia_i=n$, is equal to $\frac{|Cl(\sigma)|}{n!}$, where $\sigma \in S_n$ is such that $\text{type}(\sigma)=(a_1,a_2,\ldots)$ and $Cl(\sigma)$ denotes the conjugacy class of $\sigma$.
	\begin{example}
		The cycle index $Z_3=Z_3(t_1,t_2,t_3;S_3)$ of $S_3$ is given by $\frac{1}{6}(t_1^3+3t_1t_2+2t_3)$.
	\end{example}

	The cycle index generating function is given by
	\begin{equation}\label{cycleindex-genfunction-S_n}
	 \displaystyle 1+\sum_{n=1}^{\infty}Z_nu^n=\prod_{i\geq 1}\mathrm{exp}\left(\frac{u^i}{i}\right).
	\end{equation}
	
	\noindent Using this, one can write the generating function for the proportion of square permutations (see Equation 5, \cite{bl}) which is,
	\begin{equation}\label{squares-S_n}
	1+\sum_{n=1}^{\infty}P_2(S_n)u^n=\left(\frac{1+u}{1-u}\right)^{1/2}\prod_{k=1}^{\infty}\cosh{\left(\frac{u^{2k}}{2k}\right)}.
	\end{equation}
	
	\noindent Blum used analytic properties of this generating function, to give asymptotic estimate of $P_2(S_n)$. He proved, $P_2(S_n)\sim K\sqrt{\frac{2}{\pi}}n^{-\frac{1}{2}}$, where $K=\prod\limits_{k=1}^{\infty}\cosh(\frac{1}{2k})$.
	
	In \cite{po}, the author investigated the powers in $S_n$ for any $r\geq 2$. He generalized Blum's estimate for $P_2(S_n)$ as,
	$$P_r(S_n)\sim_{n \to \infty} \frac{\pi_r}{n^{1-\varphi(r)/r}} $$
	where $\varphi$ denotes the Euler's phi function and $\pi_r$, an explicit constant. In  \cite{bghp}, the authors have determined the constant $\pi_r$ more explicitly. For more results and estimates on powers in $S_n$ and related ideas, we urge the reader to look at \cite{be}, \cite{ba}, \cite{bb}, \cite{mp}, \cite{pa} and, \cite{tu}. The squares in the alternating group $A_n$ is determined in \cite{pou}.

	\section{Cycle index of the wreath product $G\wr S_n$}
	In this section, we will briefly discuss the notion of cycle index for the wreath product $G\wr S_n$, for a finite group $G$, as indicated in \cite{pr}. As mentioned before this is a generalization of the cycle index for $S_n$ (see Section 2, Equation~\ref{cycleindex-S_n}). Using the cycle index, we will obtain generating functions for powers in $G\wr S_n$ in Section 5.\\

	We start by briefly describing the group $G\wr S_n$, in the form we will use. We follow the exposition in \cite{ke}. Let $G$ be a finite group and $H$ be a permutation group on $\Omega=\{1,2,\ldots, n\}$. Define,
	\begin{center}
		$G\wr H=\{(f,\pi) \mid f: \Omega\to G, \pi \in H \}$
	\end{center}
	 where $f:\Omega\to G$ is any function. Define multiplication on  $G\wr H$ as,
	\begin{center}
		$(f,\pi).(f^{'}, \pi^{'})=(ff^{'}_{\pi}, \pi\pi^{'})$
	\end{center}
	 where $f_\pi:\Omega\to G$ is defined by $f_{\pi}(i)=f(\pi^{-1}(i))$ for every $i\in \Omega$, and for two functions $f,g:\Omega\to G$, $fg:\Omega\to G$ is defined by $(fg)(i)=f(i)g(i)$. Therefore, $ff^{'}_{\pi}:\Omega\to G$ is defined by $ff^{'}_{\pi}(i)=f(i)f^{'}(\pi^{-1}i)$ for every $i\in \Omega$. The set $G\wr H$ with the above binary operation forms a group, with $(id,1_H)$ as the identity, where $id:\Omega\to G$ is defined by $id(i)=1_G$ for every $i\in \Omega$. For $(f,\pi) \in G\wr H$,  $(f^{-1}_{\pi^{-1}}, \pi^{-1})$ is its inverse, where $f^{-1}:\Omega \to G$ is defined by $f^{-1}(i)=f(i)^{-1}$ for every $i\in \Omega$. We have, $f^{-1}_{\pi^{-1}}=(f_{\pi^{-1}})^{-1}$. This completes the description of the group $G\wr H$, which is called $G$ wreath $H$. We have, $|G\wr H|=|G|^n|H|$. The above description also shows that $G\wr H$ is the semidirect product of $G^n=\underbrace{G\times G\times \ldots \times G}_{n \text{ times}}$ with $H$, under the automorphism $\phi:H\to \text{Aut}(G^n)$, where $\phi_h$ acts on $G^n$ by changing indices according to $h$. \\
	 
	 For the rest of the paper, we take $H=S_n$. To define the cycle index of $G\wr S_n$, we need the description of conjugacy classes in $G\wr S_n$. We have already seen in section 2, that conjugacy classes in $S_n$ are characterized by the type of a permutation, which is just a partition of $n$. One can generalize this notion of the type of a permutation to understand the conjugacy classes in $G\wr S_n$ (see \cite{ke}). We describe it briefly. Let $(f,\pi)\in G\wr S_n$. Let $(j,\pi(j),\ldots, \pi^t(j))$ be a cycle appearing in $\pi$ (when written as a product of disjoint cycles). Define the cycle product corresponding to that cycle in $\pi$, as the unique element in $G$, given by,
	$$f(j)f_{\pi}(j)\ldots f_{\pi^t}(j)=f(j)f(\pi^{-1}(j))\ldots f(\pi^{-t}(j)).$$  
	
	\noindent Let $\C_1, \C_2, \ldots, \C_s$ denote a labeling of conjugacy classes of $G$. We define the type of an element $(f,\pi)\in G\wr S_n$ as the $s\times n$ matrix $(a_{ik})$ where, $a_{ik}$ is the number of $k$-cycles in $\pi$, whose cycle product belongs to $\C_i$. If we denote type($\pi$)=$(c_1,c_2,\ldots)$, then it is clearly seen that $a_{ik}\in \mathbb{Z}_{\geq 0}$ for every $1\leq i\leq s,1 \leq k \leq n$ and $\sum\limits_{i}ka_{ik}=c_k$ for every $k=1,2,\ldots, n$. Moreover, $\sum\limits_{i,k}ka_{ik}=n$. The following result which we state without proof determines the conjugacy classes of $G\wr S_n$, and associates a combinatorial data to each such conjugacy class.
	
	\begin{proposition}[3.7, \cite{ke}]\label{conjinwr}
		Let $(f,\pi), (g,\pi^{'}) \in G\wr S_n$. Then $(f,\pi)$ is conjugate to $(g,\pi^{'})$, if and only if $\mathrm{type}(f,\pi)=\mathrm{type}(g,\pi^{'})$. Therefore, the conjugacy classes in $G\wr S_n$ are in one-one correspondence with $s\times n$ matrices $(a_{ik})$, with $a_{ik}\in \mathbb{Z}_{\geq 0}$ for every $i,k$ and $\sum\limits_{i,k}ka_{ik}=n$. 
	\end{proposition}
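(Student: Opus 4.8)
The plan is to establish both directions from the explicit conjugation formula in $G\wr S_n$. Using the identity $(F_\pi)_\tau=F_{\tau\pi}$ together with the multiplication and inversion rules recalled above, a direct computation gives, for any $(h,\sigma)\in G\wr S_n$,
\[
(h,\sigma)(f,\pi)(h,\sigma)^{-1}=\bigl(h\cdot f_\sigma\cdot h^{-1}_{\sigma\pi\sigma^{-1}},\ \sigma\pi\sigma^{-1}\bigr).
\]
In particular the $S_n$-component is conjugated in the usual way, so a $t$-cycle $(j,\pi(j),\dots,\pi^{t-1}(j))$ of $\pi$ is carried to the $t$-cycle $(\sigma j,\sigma\pi(j),\dots)$ of $\sigma\pi\sigma^{-1}$, and $\mathrm{type}(\pi)$ is preserved.

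For the necessity ("only if") direction I would track what happens to cycle products. Writing $F=h\cdot f_\sigma\cdot h^{-1}_{\sigma\pi\sigma^{-1}}$ and evaluating the cycle product of the image cycle, namely $\prod_{k=0}^{t-1}F\bigl((\sigma\pi\sigma^{-1})^{-k}(\sigma j)\bigr)$, one substitutes $(\sigma\pi\sigma^{-1})^{-k}(\sigma j)=\sigma\pi^{-k}(j)$ and simplifies the $k$-th factor to $h(\sigma\pi^{-k}j)\,f(\pi^{-k}j)\,h(\sigma\pi^{-(k+1)}j)^{-1}$. The product then telescopes to $h(\sigma j)\bigl(f(j)f(\pi^{-1}j)\cdots f(\pi^{-(t-1)}j)\bigr)h(\sigma\pi^{-t}j)^{-1}=h(\sigma j)\cdot(\text{cycle product at }j)\cdot h(\sigma j)^{-1}$, using $\pi^{-t}j=j$. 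Hence the cycle product of each image cycle lies in the same conjugacy class $\C_i$ of $G$ as the original; together with the observation that changing the base point of a cycle only cyclically rotates, hence $G$-conjugates, the cycle product, this shows $\mathrm{type}$ is a conjugacy invariant, so conjugate elements have equal type.

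For the sufficiency ("if") direction, assume $\mathrm{type}(f,\pi)=\mathrm{type}(g,\pi')$. This forces $\mathrm{type}(\pi)=\mathrm{type}(\pi')$, so choosing $\tau\in S_n$ with $\tau\pi'\tau^{-1}=\pi$ and replacing $(g,\pi')$ by its conjugate $(id,\tau)(g,\pi')(id,\tau)^{-1}=(g_\tau,\pi)$, which has the same type by the previous paragraph, I may assume $\pi'=\pi$. The remaining conjugations available are by $(h,\sigma)$ with $\sigma\in C_{S_n}(\pi)\cong\prod_k\bigl(C_k\wr S_{c_k}\bigr)$; conjugating by $(id,\sigma)$ with $\sigma$ in an $S_{c_k}$-factor permutes the $c_k$ cycles of length $k$ while moving their cycle products along unchanged (the telescoping prefactor is $1_G$). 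Since $\mathrm{type}(f,\pi)=\mathrm{type}(g,\pi)$ records, for each $i,k$, the same number $a_{ik}$ of length-$k$ cycles with cycle product in $\C_i$, such permutations let me arrange that $f$ and $g$ have, cycle by cycle, cycle products in the same $G$-class. It then remains to treat one cycle at a time: on a single $k$-cycle, say $(1\,2\cdots k)$, carrying a function $f_0$ with cycle product $c$, solving the recursion $h(1)=1_G$, $h(i)=h(i-1)f_0(i)^{-1}$ produces $h$ supported on that cycle with $h f_0 h^{-1}_{(1\cdots k)}$ equal to the "normal form" that is $c$ at the base point and $1_G$ elsewhere, and a further conjugation by a function constant $=v$ on the cycle replaces $c$ by $vcv^{-1}$. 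Because a conjugation supported on one orbit does not disturb the others, performing this on every cycle brings both $(f,\pi)$ and $(g,\pi)$ to a common normal form, exhibiting them as conjugate. Finally, every matrix $(a_{ik})$ with $\sum_{i,k}k\,a_{ik}=n$ is realized, by building $\pi$ with $a_{ik}$ cycles of length $k$ for each $i$ and planting a representative of $\C_i$ at one point of each such cycle.

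The main obstacle I expect is purely bookkeeping: the action convention $f_\pi(i)=f(\pi^{-1}i)$ and the derived identity $(F_\pi)_\tau=F_{\tau\pi}$ must be applied with care so that the factors in the telescoping product (necessity) and in the recursive solution for $h$ (sufficiency) appear in the correct order, and one must keep straight throughout that cycle products are only well defined up to $G$-conjugacy. Assembling the three conjugations — relabel $\pi$, permute equal-length cycles, normalize each cycle — into a single element of $G\wr S_n$ is then routine.
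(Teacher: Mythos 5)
The paper does not prove this proposition at all: it is quoted from Kerber \cite{ke} (3.7) and stated without proof, so there is no in-paper argument to compare against. Your proof is correct and supplies exactly what the citation hides. The conjugation formula $(h,\sigma)(f,\pi)(h,\sigma)^{-1}=(h\,f_\sigma\,h^{-1}_{\sigma\pi\sigma^{-1}},\sigma\pi\sigma^{-1})$ is right under the paper's convention $f_\pi(i)=f(\pi^{-1}i)$, the telescoping computation does show each image cycle product is $h(\sigma j)\,c\,h(\sigma j)^{-1}$, and in the sufficiency direction your recursion $h(i)=h(i-1)f_0(i)^{-1}$ does reduce a single $k$-cycle to the normal form carrying the cycle product $f_0(1)f_0(k)\cdots f_0(2)$ (which is precisely the paper's cycle product at base point $1$) at the base point and $1_G$ elsewhere; combined with permuting same-length cycles by $(id,\sigma)$, $\sigma\in C_{S_n}(\pi)$, and a constant conjugation on each orbit, this brings both elements to a common normal form. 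The surjectivity onto all matrices $(a_{ik})$ with $\sum_{i,k}ka_{ik}=n$ is handled correctly as well. This is in substance the classical argument (conjugation formula plus per-cycle normal form), i.e.\ the route Kerber himself takes, so nothing further is needed beyond the bookkeeping care you already flag.
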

	
	\noindent The type of $(f,\pi)$ is well-defined in the sense, that the cycle product doesn't depend on the representation of a cycle $(j,\pi(j),\ldots, \pi^t(j))$ in $\pi$. In other words, if $(j,\pi(j),\ldots, \pi^t(j))=(l,\pi(l),\ldots, \pi^t(l))$, then, their corresponding cycle products $f(j)f_{\pi}(j)\ldots f_{\pi^t}(j)$ and $f(l)f_{\pi}(l)\ldots f_{\pi^t}(l)$ are conjugate in $G$. Later we will see that, in fact this gives us a degree of freedom, to choose a suitable cycle product in various computations.
	
	Given $g=(f,\pi)\in G\wr S_n$, let $\mathcal{Z}(g)$ denote the centralizer of $g$ in $G\wr S_n$. The structure of the centralizer is well-known. We will just need to know the order of the centralizer for our purpose.
	
	\begin{proposition}[3.9, \cite{ke}]\label{sizecentra}
		Let $g=(f,\pi)\in G\wr S_n$ and, $\mathrm{type}(g)=(a_{ik})$ which is a $s\times n$ matrix with $a_{ik}\in \mathbb{Z}_{\geq 0}$ and, $\sum\limits_{i,k}ka_{ik}=n$. Then, $|\mathcal{Z}(g)|=\prod\limits_{i,k}a_{ik}!(k|G|/|\C_i|)^{a_{ik}}$.
	\end{proposition}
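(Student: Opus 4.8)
The plan is to derive the formula from the orbit--stabilizer theorem together with Proposition~\ref{conjinwr}. Since $|G\wr S_n|=|G|^nn!$, we have $|\mathcal{Z}(g)|=|G|^nn!/|Cl(g)|$, where $Cl(g)$ denotes the conjugacy class of $g$; and by Proposition~\ref{conjinwr}, $Cl(g)$ is exactly the set of all $(h,\sigma)\in G\wr S_n$ whose type is $(a_{ik})$. So it suffices to enumerate that set and then simplify the resulting expression.

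To count it, I would build a generic element of type $(a_{ik})$ in three independent stages; write $c_k=\sum_i a_{ik}$ for the number of cycles of length $k$ in the underlying permutation. First, choose $\sigma$, which must have ordinary cycle type $(c_1,c_2,\ldots)$: there are $n!\,/\,\bigl(\prod_k k^{c_k}c_k!\bigr)$ such permutations. Second, for each $k$ decide which of the $c_k$ cycles of $\sigma$ of length $k$ will have cycle product in each class $\C_i$; since this amounts to choosing which $a_{ik}$ of those $c_k$ cycles go to $\C_i$, it contributes $\prod_k c_k!\,/\,\prod_i a_{ik}!$. Third, for each $k$-cycle $(j,\sigma(j),\ldots,\sigma^{k-1}(j))$ labelled by $\C_i$, count the functions $h$ on the $k$ points of that cycle for which the cycle product $h(j)h(\sigma^{-1}(j))\cdots h(\sigma^{-(k-1)}(j))$ lies in $\C_i$: this cycle product is an ordered product of $k$ elements of $G$ chosen freely and independently, so viewed as a map $G^k\to G$ it is surjective with every fibre of size $|G|^{k-1}$, hence it lands in $\C_i$ for exactly $|\C_i|\,|G|^{k-1}$ of the $|G|^k$ choices. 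Distinct cycles have disjoint supports, so these last counts multiply over all cycles, giving
\begin{align*}
|Cl(g)| &=\frac{n!}{\prod_k k^{c_k}c_k!}\cdot\prod_k\frac{c_k!}{\prod_i a_{ik}!}\cdot\prod_{i,k}\bigl(|\C_i|\,|G|^{k-1}\bigr)^{a_{ik}}\\
&= n!\prod_{i,k}\frac{|\C_i|^{a_{ik}}\,|G|^{(k-1)a_{ik}}}{k^{a_{ik}}\,a_{ik}!},
\end{align*}
where the last step uses $k^{c_k}=\prod_i k^{a_{ik}}$.

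Finally I would substitute this into $|\mathcal{Z}(g)|=|G|^nn!/|Cl(g)|$: the two factors $n!$ cancel, and writing $|G|^n=\prod_{i,k}|G|^{ka_{ik}}$ (valid since $\sum_{i,k}ka_{ik}=n$) the powers of $|G|$ combine to $|G|^{ka_{ik}-(k-1)a_{ik}}=|G|^{a_{ik}}$ in each slot $(i,k)$, leaving $|\mathcal{Z}(g)|=\prod_{i,k}a_{ik}!\,\bigl(k|G|/|\C_i|\bigr)^{a_{ik}}$, which is the assertion. The one step that is not pure bookkeeping is the count on a single cycle in the third stage; this is precisely where the ``degree of freedom'' in the choice of cycle product noted after Proposition~\ref{conjinwr} is used, and it reduces to the elementary fact that prescribing the product of an ordered $k$-tuple of elements of $G$ leaves $|G|^{k-1}$ admissible tuples, proved by a one-line induction on $k$. (Alternatively, one could read the order off from the known structure of $\mathcal{Z}(g)$: it decomposes as a direct product, over the pairs $(i,k)$, of wreath products $\Gamma_{i,k}\wr S_{a_{ik}}$ with $|\Gamma_{i,k}|=k|G|/|\C_i|$; but the enumerative argument above is more self-contained.)
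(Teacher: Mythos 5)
Your argument is correct, but note that the paper does not actually prove this proposition: it is quoted verbatim from Kerber (3.9 in \cite{ke}), with only the remark that the structure of the centralizer is well known. So your proposal supplies a proof where the paper offers none, and it does so by a different route than the cited source. You count the conjugacy class via orbit--stabilizer, using Proposition~\ref{conjinwr} to identify $Cl(g)$ with the set of elements of type $(a_{ik})$, and the three-stage enumeration is sound: the permutation count $n!/\prod_k k^{c_k}c_k!$, the multinomial factor $\prod_k c_k!/\prod_i a_{ik}!$ for labelling cycles by classes (legitimate because the class of a cycle product is independent of the chosen starting point, as the paper notes after Proposition~\ref{conjinwr}), and the per-cycle count $|\C_i||G|^{k-1}$, which is exactly the fibre-size fact that prescribing the product of an ordered $k$-tuple leaves $|G|^{k-1}$ free choices; the supports of distinct cycles are disjoint, so these counts multiply, and the final algebra recovering $\prod_{i,k}a_{ik}!\,(k|G|/|\C_i|)^{a_{ik}}$ checks out (and agrees with the paper's Examples~\ref{ex1} and~\ref{ex2}). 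Kerber's own treatment is structural rather than enumerative: he exhibits $\mathcal{Z}(g)$ as a direct product over the pairs $(i,k)$ of wreath-product-type subgroups of order $a_{ik}!\,(k|G|/|\C_i|)^{a_{ik}}$, which yields the group structure and not merely the order --- the alternative you mention in your closing parenthesis. Your counting argument buys self-containedness (it needs only Proposition~\ref{conjinwr} and elementary counting), at the cost of giving no structural information; for the purposes of this paper, where only the order enters the cycle index, either route suffices.
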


	\noindent With these informations we can now define the cycle index for $G\wr S_n$. For convenience, we will denote the type of $g\in G\wr S_n$, as $T(g)=(T(g)_{ij})$, which is a $s\times n$ matrix. We define the cycle index of $G\wr S_n$ as,
	
	\begin{equation}
	Z_n(t_{11},t_{12},\ldots,t_{1n},\ldots,t_{s1},t_{s2},\ldots, t_{sn}; G\wr S_n)=\frac{1}{|G|^nn!}\sum\limits_{g\in G} \left( \prod\limits_{i,j} t_{ij}^{T(g)_{ij}} \right).
	\end{equation}
	
	\noindent The cycle index of $G\wr S_n$ is a polynomial in the variables $t_{ij}$ for $1\leq i\leq s, 1\leq j\leq n$. We call this polynomial the cycle polynomial associated with $G\wr S_n$. Observe that given a $s\times n$ matrix $(c_{ij})$, with $\sum_{i,j}jc_{ij}=n$ (that is, it is a type of an element in $G\wr S_n$), the coefficient of the monomial $\prod\limits_{i,j}t_{ij}^{c_{ij}}$, is equal to $\frac{|Cl(g)|}{|G|^nn!}$, where $g$ is such that $T(g)=(c_{ij})$, and $Cl(g)$ denotes the conjugacy class of $g$ in $G\wr S_n$. Thus, the coefficient of a monomial in $Z_n$, is precisely the probability that a randomly chosen element of $G\wr S_n$ belongs to that conjugacy class. The cycle-index generating function of $G\wr S_n$ is,
	\begin{equation}
		1+\sum_{n=1}^{\infty}Z_nu^n=1+\sum_{n=1}^{\infty}\frac{1}{|G|^nn!}\left[\sum\limits_{g\in G} \left( \prod\limits_{i,j} t_{ij}^{T(g)_{ij}} \right)\right]u^n.
	\end{equation}
	
	\noindent We see some examples of cycle index in $G\wr S_n$.
	\begin{example}\label{ex1}
	Let $G=C_2=\{\pm 1\}$, the cyclic group of order 2. Then $C_2\wr S_2$ is a group of order 8, which turns out to be the Dihedral group $D_8$ of order 8. Let $\C_1=\{1\}, \C_2=\{-1\}$ be the two conjugacy classes of $C_2$. The conjugacy classes of $C_2\wr S_2$ are parametrized by $2\times 2$ matrices $(a_{ik})$, with $a_{ik}\in \mathbb{Z}_{\geq 0}$, and $\sum\limits_{i,k}ka_{ik}=2$. These matrices are,
	\[
	\begin{bmatrix}
		2 & 0 \\
		0 & 0
	\end{bmatrix}
	, \begin{bmatrix}
	0 & 0\\
	2 & 0
	\end{bmatrix}
	, \begin{bmatrix}
	1 & 0\\
	1 & 0
	\end{bmatrix}
	, \begin{bmatrix}
	0 & 1\\
	0 & 0
	\end{bmatrix}
	, \begin{bmatrix}
	0 & 0\\
	0 & 1
	\end{bmatrix}.
	\]
	
	\noindent Thus there are exactly 5 conjugacy classes in $C_2\wr S_2$. Observe that the coefficient of each monomial, in the cycle index as discussed above is the number of elements in the conjugacy class represented by that monomial divided by the size of the group. Consider, for example the monomial $t_{11}t_{21}$. This monomial represents the conjugacy class parametrized by \[\begin{bmatrix}
	1 & 0\\ 1 & 0 \end{bmatrix} \]
	whose conjugacy class size is $\frac{8}{4}=2$, using Proposition~\ref{sizecentra}, by putting $|G|=2, |\C_1|=1, |\C_2|=1$. By calculating this coefficient for each monomial, we get the cycle index for $C_2\wr S_2$ as follows
	
	\begin{equation}
	Z_2(t_{11}, t_{12}, t_{21}, t_{22}; C_2\wr S_2)=\frac{1}{8}(t_{11}^2+t_{21}^2+2t_{11}t_{21}+2t_{12}+2t_{22}).
	\end{equation}
	\end{example}
	
	\begin{example}\label{ex2}
		We take a look at another example. Consider, $S_3\wr S_2$ which is a group of order 72. Let $\C_1=\{e\}, \C_2=\{(1 2), (2 3), (1 3)\}, \C_3=\{(1 2 3), (1 3 2)\}$ be the three conjugacy classes in $S_3$. The conjugacy classes in $S_3\wr S_2$ are given by $3\times 2$ matrices $(a_{ik})$, where $a_{ik}\in \mathbb{Z}_{\geq 0}$, and $\sum\limits_{i,k}ka_{ik}=2$. These matrices are,
			\[
		\begin{bmatrix}
		2 & 0 \\
		0 & 0\\
		0 & 0
		\end{bmatrix}
		, \begin{bmatrix}
		0 & 0\\
		2 & 0\\
		0 & 0\\
		\end{bmatrix}
		, \begin{bmatrix}
		0 & 0\\
		0 & 0\\
		2 & 0
		\end{bmatrix}
		, \begin{bmatrix}
		1 & 0\\
		1 & 0\\
		0 & 0
		\end{bmatrix}
		, \begin{bmatrix}
		0 & 0\\
		1 & 0\\
		1 & 0
		\end{bmatrix}
		, \begin{bmatrix}
			1 & 0\\
			0 & 0\\
			1 & 0
		\end{bmatrix}
		, \begin{bmatrix}
			0 & 1\\
			0 & 0\\
			0 & 0
		\end{bmatrix}
		, \begin{bmatrix}
		0 & 0\\
		0 & 1\\
		0 & 0
		\end{bmatrix}
		, \begin{bmatrix}
		0 & 0\\
		0 & 0\\
		0 & 1
		\end{bmatrix}.
		\]
		Thus there are nine conjugacy classes. Once again by calculating the coefficient of each monomial using the formula for centralizers, the cycle polynomial of $S_3\wr S_2$ is given by, 
		\begin{equation}
				\frac{1}{72}(t_{11}^2+9t_{21}^2+ 4t_{31}^2+6t_{11}t_{21}+12t_{11}t_{31}+4t_{21}t_{31}+6t_{12}
			+18t_{22}+12t_{32}).
		\end{equation}
	\end{example}

	The following proposition expresses the cycle index generating function of $G\wr S_n$ in an infinite product form, which will be useful later.
	
	\begin{proposition}\label{cycleindex}
		Let $G$ be a finite group. Let $Z_n$ denote the cycle index of $G\wr S_n$. Let $\C_1,\C_2,\ldots, \C_s$ denote a labeling of conjugacy classes of $G$. We have,
		$$1+\sum\limits_{n=1}^{\infty}Z_nu^n=\prod_{i=1}^{s}\prod_{j=1}^{\infty}\mathrm{exp}\left(t_{ij}\frac{|\C_i|u^j}{|G|j}\right)$$
	\end{proposition}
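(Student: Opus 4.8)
The plan is to prove the identity by extracting, for each fixed $n\geq 1$, the coefficient of an arbitrary monomial $\prod_{i,j}t_{ij}^{c_{ij}}$ on both sides, where $(c_{ij})$ ranges over all $s\times n$ matrices of nonnegative integers with $\sum_{i,j}jc_{ij}=n$; by Proposition~\ref{conjinwr} these are precisely the types of elements of $G\wr S_n$. So the whole argument reduces to a coefficient comparison for formal power series in $u$ whose coefficients are polynomials in the variables $t_{ij}$.

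First I would handle the left-hand side. By the definition of $Z_n$ and Proposition~\ref{conjinwr}, the coefficient of $\prod_{i,j}t_{ij}^{c_{ij}}$ in $Z_n$ is $\frac{|Cl(g)|}{|G|^n n!}$, where $g\in G\wr S_n$ is any element with $T(g)=(c_{ij})$. Since $|Cl(g)|=|G\wr S_n|/|\mathcal{Z}(g)|=|G|^n n!/|\mathcal{Z}(g)|$, this coefficient equals $1/|\mathcal{Z}(g)|$, and Proposition~\ref{sizecentra} then gives
$$\frac{1}{|\mathcal{Z}(g)|}=\prod_{i,j}\frac{1}{c_{ij}!}\left(\frac{|\C_i|}{j|G|}\right)^{c_{ij}}.$$
(When $\sum_{i,j}jc_{ij}\neq n$ no such $g$ exists and the coefficient is $0$, but this never occurs once we track the power $u^n$.)

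Next I would expand the right-hand side. Each factor is
$$\mathrm{exp}\left(t_{ij}\frac{|\C_i|u^j}{|G|j}\right)=\sum_{m\geq 0}\frac{1}{m!}\left(\frac{|\C_i|}{|G|j}\right)^m t_{ij}^{m}u^{jm},$$
and for a fixed power $u^n$ only the factors with $j\leq n$ contribute, each only through finitely many exponents $m$, so the infinite product is a well-defined formal power series in $u$ whose coefficients are polynomials in the $t_{ij}$. Multiplying these series, the coefficient of $u^n\prod_{i,j}t_{ij}^{c_{ij}}$, for $(c_{ij})$ with $\sum_{i,j}jc_{ij}=n$, is exactly $\prod_{i,j}\frac{1}{c_{ij}!}\left(\frac{|\C_i|}{j|G|}\right)^{c_{ij}}$. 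This matches the expression obtained on the left, so the two sides agree term by term, which proves the proposition. The only point requiring a little care is the bookkeeping of indices together with the formal convergence of the infinite product; this is routine, since in any fixed $u$-degree only finitely many variables appear and each with bounded exponent. I do not expect a genuine obstacle here: all the substance is already contained in Propositions~\ref{conjinwr} and \ref{sizecentra}.
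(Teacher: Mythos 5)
Your proposal is correct and follows essentially the same route as the paper: both sides are compared coefficient-wise, using Proposition~\ref{sizecentra} to identify the coefficient of each monomial in $Z_n$ with $1/|\mathcal{Z}(g)|=\prod_{i,j}\frac{1}{c_{ij}!}\left(\frac{|\C_i|}{j|G|}\right)^{c_{ij}}$ and matching it against the expansion of the exponential factors. Your write-up simply spells out the formal-power-series bookkeeping that the paper leaves as "easy to see."
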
 

	\begin{proof}
		Observe first that coefficient of a monomial $\prod\limits_{i,j}t_{ij}^{a_{ij}}$ is one over the size of the centralizer of the conjugacy class in $G\wr S_n$, parametrized by the $s\times n$ matrix $(a_{ij})$. By Proposition~\ref{sizecentra}, this is 
		$$\frac{1}{\prod\limits_{i,j}a_{ij}!(j|G|/|\C_i|)^{a_{ij}}}=\frac{|\C_i|^{a_{ij}}}{a_{ij}!j^{a_{ij}}|G|^{a_{ij}}}$$
		Using this it is easy to see that the coefficient of $u^{n}$ in $\prod\limits_{i=1}^{s}\prod\limits_{j=1}^{\infty}\mathrm{exp}\left(t_{ij}\frac{|\C_i|u^j}{|G|j}\right)$ precisely yields $Z_n$, which is the cycle polynomial of $G\wr S_n$.
	\end{proof}
	 \noindent Observe that taking $G=\{1\}$, the above proposition is the factorization (in infinite product form) for the cycle index generating function of $S_n$ (see Equation~\ref{cycleindex-genfunction-S_n}, Section 2).

	\section{Computing powers in $G\wr S_n$}
	
	In this section, we compute the $r^{th}$ powers in $G\wr S_n$, where $r\geq 2$ is prime. We begin with some preparatory lemmas. The proof of the following lemma is easy.
	
	\begin{lemma}\label{preplem1}
		Let $M\geq 2$ be any positive integer. Suppose $G$ is a finite group. Then, the power map $\omega_M:G\to G$ defined by $g\mapsto g^M$ is surjective if and only if $(M,|G|)=1$
	\end{lemma}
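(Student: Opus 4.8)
The plan is to establish the two implications separately; both are elementary.

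For the ``if'' direction, suppose $\gcd(M,|G|)=1$ and choose, by Bézout, integers $a,b$ with $aM+b|G|=1$. For any $g\in G$, Lagrange's theorem gives $g^{|G|}=e$, and since all powers of $g$ commute we get $g=g^{aM+b|G|}=(g^{a})^{M}(g^{|G|})^{b}=(g^{a})^{M}=\omega_M(g^{a})$; hence $\omega_M$ is onto (in fact bijective, $G$ being finite).

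For the ``only if'' direction I would argue the contrapositive: assuming $d:=\gcd(M,|G|)>1$, I want to show $\omega_M$ is not onto. Picking a prime $p\mid d$, we have $p\mid M$, say $M=pM_1$, so $\omega_M(g)=(g^{M_1})^{p}$ and therefore $\omega_M(G)\subseteq\omega_p(G)$. Thus it is enough to prove that $\omega_p\colon G\to G$ fails to be onto whenever the prime $p$ divides $|G|$. To see this, suppose $\omega_p$ were onto; as $G$ is finite it would be a bijection, hence of finite multiplicative order, so $\omega_p^{\,N}=\mathrm{id}$ for some $N\geq1$. Since $\omega_p^{\,N}=\omega_{p^{N}}$ (immediate induction), every $g\in G$ satisfies $g^{p^{N}}=g$, i.e. $g^{p^{N}-1}=e$; in particular every element order divides $p^{N}-1$ and so is prime to $p$. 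This contradicts Cauchy's theorem, which produces an element of order $p$ from $p\mid |G|$. That contradiction finishes the argument.

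The only step that is not pure bookkeeping is this last claim, that $\omega_p$ is not onto when $p\mid|G|$; the Bézout/Lagrange half and the reduction to a prime are routine. An alternative proof of the claim, should one prefer to avoid the ``finite-order permutation'' trick: let $p^{e}$ (with $e\geq1$, by Cauchy) be the largest power of $p$ appearing as the $p$-part of an element order in $G$, and observe that an element of order exactly $p^{e}$ cannot be a $p$-th power, since $g^{p}$ having order $p^{e}$ would force $\langle g\rangle$ to contain an element of order $p^{e+1}$. Either way the proposition follows.
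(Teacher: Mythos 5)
Your proof is correct. Note, however, that the paper supplies no proof of this lemma at all --- it merely remarks that ``the proof of the following lemma is easy'' --- so there is no argument of the authors' to compare yours against; your write-up fills that gap. Both halves check out: the B\'ezout/Lagrange computation gives surjectivity (indeed bijectivity) of $\omega_M$ when $(M,|G|)=1$, and in the contrapositive direction the reduction $\omega_M(G)\subseteq\omega_p(G)$ for a prime $p\mid\gcd(M,|G|)$ is fine, as are both of your arguments that $\omega_p$ is not onto when $p\mid |G|$ (if it were, some iterate $\omega_p^{\,N}=\omega_{p^N}$ would be the identity, forcing every element order to divide $p^N-1$, against Cauchy; alternatively, an element whose order is the maximal $p$-part $p^e$ cannot be a $p$-th power). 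That said, the ``only if'' half admits a shortcut that avoids both the reduction to $\omega_p$ and the iteration trick: since $G$ is finite, $\omega_M$ is onto if and only if it is injective, and if $p\mid\gcd(M,|G|)$ then Cauchy's theorem provides $x\neq e$ of order $p$ with $x^M=(x^p)^{M/p}=e=e^M$, so $\omega_M$ identifies $x$ with $e$ and is not injective. Your argument stands as written; this is only a streamlining of the routine direction.
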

	
	
	\noindent The following lemma is the most important for our purpose. We set an important convention at this point. Given a $s\times n$ matrix $(a_{ij})$, we assume $a_{ij}=0$ for all $i>s$ or, $j>n$. We further assume for $1\leq i\leq s$,
	\begin{center}
		$(\C_i)^r=\{x^r \mid  x\in \C_i \}$
	\end{center}
	where $\C_1,\C_2,\ldots,\C_s$ denotes a labeling of the conjugacy classes of $G$. Observe that $(\C_i)^r=\C_j$ for some $1\leq j\leq s$.
	
	\begin{lemma}\label{preplem3}
		Let $g=(f,\pi) \in G\wr S_n$. Let $T(g)=(T(g)_{ij})_{s\times n}$  be  the type of $g$. Then, the type of $g^r$ is given by,
		\[
		T(g^r)_{i,j}=
		\begin{cases}
			\sum\limits_{z=1}^{m}T(g)_{y_z,j}+rT(g)_{i,rj} & \text{ when } r \nmid j\\
			
			rT(g)_{i,rj} & \text{ when } r \mid j
			 
		\end{cases}
		\]
	where, $\{y_1,y_2,\ldots, y_m\}\subset \{1,2,\ldots, s\}$ is the complete set of indices $k$ such that $(\C_k)^r=\C_i$.
	\end{lemma}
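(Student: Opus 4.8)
The plan is to reduce the computation of $\mathrm{type}(g^r)$ to two basic pieces: first, understanding how a single ``wreath cycle'' behaves under the $r$-th power map, and second, bookkeeping the cycle products. Concretely, write $\pi$ as a product of disjoint cycles and observe that $G\wr S_n$ decomposes, cycle by cycle, into ``elementary'' pieces: for a $k$-cycle $(j,\pi(j),\dots,\pi^{k-1}(j))$ appearing in $\pi$, the restriction of $g=(f,\pi)$ to the corresponding $k$-element block is an element of $G\wr C_k$ whose cycle product is some $x\in G$. Since $g^r$ is computed blockwise, it suffices to treat each such elementary piece separately and then reassemble.

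Next I would analyze a single elementary piece $h\in G\wr C_k$ with cycle product $x\in\C_\ell$ for some $\ell$. Two cases occur, mirroring Lemma~\ref{power-in-Sn}. If $r\nmid k$, then $\pi$ restricted to this block is still a single $k$-cycle when raised to the $r$-th power (because $r$ is prime and coprime to $k$), and one checks that the cycle product of $h^r$ is (a conjugate of) $x^r$ — indeed, going around the cycle $r$ times multiplies the cycle product by itself $r$ times up to the well-definedness ambiguity noted after Proposition~\ref{conjinwr}, so the new cycle product lies in $(\C_\ell)^r$. Hence each $k$-cycle with cycle product in $\C_\ell$ contributes one $k$-cycle to $g^r$ with cycle product in $(\C_\ell)^r$. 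If instead $r\mid k$, say $k=rj$, then $\pi$ restricted to the block breaks into $r$ disjoint $j$-cycles under the $r$-th power; I need to verify that the cycle product of each of these $r$ resulting $j$-cycles equals $x$ itself (again up to conjugacy), because traversing one of the shorter cycles once corresponds exactly to traversing the original $k$-cycle once. This is the step that requires the most care: one must track the indices carefully and use the freedom to choose the base point of a cycle, so I expect this blockwise product computation — especially showing all $r$ shorter cycles inherit the \emph{same} cycle product class $x$, not $x^r$ — to be the main obstacle.

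Finally I would assemble the count. Fix target indices $i$ and $j$. The $j$-cycles in $g^r$ with cycle product in $\C_i$ arise from exactly two sources. First, from $rj$-cycles in $g$ (necessarily present whenever $rj\le n$; recall the convention $T(g)_{i,rj}=0$ otherwise): each such cycle, regardless of its own cycle product class, splits into $r$ many $j$-cycles all of whose cycle products lie in the \emph{same} class as the original; so a $rj$-cycle with cycle product in $\C_i$ contributes $r$ copies to $T(g^r)_{i,j}$, giving the term $rT(g)_{i,rj}$ in both cases. Second, when $r\nmid j$, there are also the genuine $j$-cycles of $g$: a $j$-cycle with cycle product in $\C_k$ survives as a $j$-cycle of $g^r$ with cycle product in $(\C_k)^r$, so it contributes to $T(g^r)_{i,j}$ precisely when $(\C_k)^r=\C_i$; summing over all such $k\in\{y_1,\dots,y_m\}$ yields $\sum_{z=1}^m T(g)_{y_z,j}$. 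When $r\mid j$, no $j$-cycle of $g$ can produce a $j$-cycle under the $r$-th power map (by the $r\nmid k$ analysis applied with $k=j$, such a cycle would need $r\nmid j$), so that sum is absent. Combining the two sources in each case gives exactly the claimed formula, completing the proof.
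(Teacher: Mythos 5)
Your proposal is correct and follows essentially the same route as the paper: reduce to the individual cycles of $\pi$ via Lemma~\ref{power-in-Sn}, note that an $rj$-cycle splits into $r$ cycles of length $j$ whose cycle products each traverse the original cycle once (hence stay in the same conjugacy class), while a $j$-cycle with $r\nmid j$ survives with cycle product $x^r$ landing in $(\C_\ell)^r$, and then count the two sources for each pair $(i,j)$. The one step you leave as ``to be verified'' --- the index computation showing that the cycle products of the $r$ shorter cycles with respect to $h=ff_{\pi}\cdots f_{\pi^{r-1}}$ are conjugates of $x$ rather than of $x^r$ --- is precisely the calculation the paper carries out in detail, and the reason you give for it (one pass around a short cycle under $h$ equals one pass around the full cycle under $f$) is exactly the right one.
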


	\begin{proof}
		Let $g=(f,\pi) \in G\wr S_n$, and $T(g)=(T(g)_{ij})_{s\times n}$ denotes the type of $g$. Let the type of $\pi$ be given by $(c_1,c_2,\ldots, c_n)$. Observe that,
		$$g^r=(f,\pi)^r=(ff_{\pi}\ldots f_{\pi^{r-1}},\pi^r).$$
		Let, $h=ff_{\pi}\ldots f_{\pi^{r-1}} \in G$. To find $T(g^r)$, which is the type of $g^r$, we need to compute the cycle products corresponding to each cycle in $\pi^r$. Let us fix $1\leq i\leq s, 1\leq j\leq n$. By Lemma~\ref{power-in-Sn}, we have to consider two cases. For the first case, let us assume that $r\mid j$. Then, we know by Lemma~\ref{power-in-Sn}, if $(d_1,d_2,\ldots, d_n)$ be the type of $\pi^r$, then $d_j=rc_{rj}$. In other words, a $j$-cycle in $\pi^r$, can only be obtained by raising a $rj$-cycle to the power of $r$. Let us consider a $rj$-cycle in $\pi$ of the form,
		$$\sigma=(l_1,l_2,\ldots, l_r, l_{r+1}, \ldots, l_{2r}, \ldots, l_{(j-1)r+1}, \ldots, l_{rj})$$ such that the cycle product of $\sigma$ with respect to $f$, which is $$x=f(l_1)f(\sigma^{-1}(l_1))\ldots f(\sigma^{-(rj-1)}(l_1)) \in \C_i.$$
		\noindent Now,
		$$\sigma^r=(l_1,l_{r+1}, \ldots, l_{(j-1)r+1})(l_2,l_{r+2}, \ldots, l_{(j-1)r+2})\ldots (l_r,l_{2r}, \ldots, l_{jr}).$$
		Thus, $\sigma^r=\sigma_1\sigma_2\ldots \sigma_r$, where,
		$\sigma_t=(l_t,l_{r+t}, \ldots, l_{(j-1)r+t})$. For $1\leq t\leq r$, let us find the cycle product of $\sigma_t$ with respect to $h$, which is,
		
		\begin{eqnarray*}
		y &=& h(l_t)h(\sigma_t^{-1}(l_t))\ldots h(\sigma_t^{-(r-1)}(l_t)) \\ &=& f(l_t)f(\pi^{-1}(l_t))\ldots  f(\pi^{-(r-1)}(l_t))f(\sigma_t^{-1}(l_t))  f(\pi^{-1}(\sigma_t^{-1}(l_t)))  \ldots \\ &&  f(\pi^{-(r-1)}(\sigma_t^{-1}(l_t)))  \ldots  f(\sigma_t^{-(r-1)}(l_t))f(\pi^{-1}(\sigma_t^{-(r-1)}(l_t))) \ldots \\ && f(\pi^{-(r-1)}(\sigma_t^{-(r-1)}(l_t))) \\ &=&  f(l_t)f(\sigma^{-1}(l_t))\ldots f(\sigma^{-(r-1)}(l_t))f(\sigma^{-r}(l_t))  f(\sigma^{-(r+1)}(l_t))\ldots f(\sigma^{-(2r-1)}(l_t))\ldots \\ && f(\sigma^{-(j-1)r}(l_t))\ldots f(\sigma^{-(rj-1)}(l_t)).
		\end{eqnarray*}
		
		\noindent It is clear that $x$ is conjugate to $y$ in $G$, and hence, $y\in \C_i$. We know, $T(g)_{i,rj}$ denotes the number of $rj$-cycles in $\pi$, whose cycle product belongs to $\C_i$. One $rj$-cycle in $\pi$ contributes $r$ $j$-cycles in $\pi^r$, and by the above computation the cycle product of each such $j$-cycle belongs to $\C_i$, if the cycle product of the former belongs to $\C_i$. We therefore conclude that in the case when $r\mid j$, $T(g^r)_{i,j}=rT(g)_{i,rj}$. 
		
		\noindent Let us now move on to the second case, in which $r\nmid j$. Once again, by Lemma~\ref{power-in-Sn}, 
		$$d_j=c_j+rc_{rj}.$$ 
		Thus, a $j$-cycle in $\pi^r$ is obtained from $rj$-cycles in $\pi$ just as above, as well as, from a $j$-cycle in $\pi$. We need to address the case of the latter possibility, since, the former one has already been taken care of in the previous case. Consider a $j$-cycle in $\pi$ say, $\sigma=(l_1,l_2,\ldots, l_j)$. The cycle product of $\sigma$ with respect to $f$ is $x=f(l_1)f(\sigma^{-1}(l_1))\ldots f(\sigma^{-(j-1)}(l_1))$. Since $\sigma^j=id$, the cycle product of $\sigma^r$ with respect to $h$ is
		\begin{eqnarray*}
				y&=&h(l_1)h(\sigma^{-r}(l_1))\ldots h(\sigma^{-r(j-1)}(l_1))=f(l_1)f(\pi^{-1}(l_1))\ldots  f(\pi^{-(r-1)}(l_1)) \\ && f(\sigma^{-r}(l_1)) f(\pi^{-1}(\sigma^{-r}(l_1)))\ldots f(\pi^{-(r-1)}(\sigma^{-r}(l_1)))\ldots  f(\sigma^{-r(j-1)}(l_1)) \\ && f(\pi^{-1}(\sigma^{-r(j-1)}(l_1))) \ldots f(\pi^{-(r-1)}(\sigma^{-r(j-1)}(l_1))) \\ &=& f(l_1)f(\sigma^{-1}(l_1))\ldots  f(\sigma^{-(r-1)}(l_1))f(\sigma^{-r}(l_1))f(\sigma^{-(r+1)}(l_1))\ldots f(\sigma^{-(2r-1)}(l_1)) \\ && \ldots f(\sigma^{-r(j-1)}(l_1))  f(\sigma^{-(r(j-1)+1)}(l_1))\ldots f(\sigma^{-(r(j-1)+(r-1))}(l_1)) \\ &=& f(l_1)f(\sigma^{-1}(l_1))\ldots f(\sigma^{-(j-1)}(l_1))f(\sigma^{-j}(l_1))f(\sigma^{-(j+1)}(l_1))\ldots f(\sigma^{-(2j-1)}(l_1)) \\ && \ldots f(\sigma^{-(r-1)j}(l_1))\ldots  f(\sigma^{-((r-1)j+j-1)}(l_1)) \\ &=& f(l_1)f(\sigma^{-1}(l_1))\ldots f(\sigma^{-(j-1)}(l_1))f(l_1)f(\sigma^{-1}(l_1))\ldots f(\sigma^{-(j-1)}(l_1))\ldots \\ && f(l_1)f(\sigma^{-1}(l_1))\ldots   f(\sigma^{-(j-1)}(l_1))=\underbrace{x.x\ldots x}_{r \text{ times}}=x^r.
		\end{eqnarray*}
	
		\noindent Thus, we see that the cycle product $y\in (\C_i)^r$. Since for $1\leq k\leq s$, $T(g)_{k,j}$ denotes the number of $j$-cycles each of  whose cycle product belongs to $\C_k$, we can conclude that if $\{y_1,y_2,\ldots ,y_m\}\subset \{1,2,\ldots, s\}$ be the complete set of indices $k$, such that $(\C_k)^r=\C_i$, then $T(g^r)_{i,j}$ has a contribution of $\sum\limits_{z=1}^{m}T(g)_{y_z,j}$ coming as cycle products of $j$-cycles in $\pi^r$. Therefore, we can finally conclude that, when $r\nmid j$
		$$T(g^r)_{i,j}=\sum\limits_{z=1}^{m}T(g)_{y_z,j}+ rT(g)_{i,rj}.$$
		This completes the proof.
	\end{proof}
	
	\noindent We can now characterize the conjugacy classes in $G\wr S_n$ that are $r^{th}$ powers entirely in terms of the associated combinatorial data.
	
	\begin{proposition}\label{power-in-wreath}
		Let $G$ be a finite group which has $s$ number of conjugacy classes. Let out of $s$, exactly $d$ such conjugacy classes are not $r^{th}$ powers in $G$.  Let $\C_1, \C_2, \ldots , \C_s$ be a labeling of conjugacy classes of $G$ such that $\C_1,\C_2, \ldots, \C_d$ are not $r^{th}$ powers. Let $g\in G\wr S_n$, and $T(g)_{s\times n}=(T(g)_{ij})$ denotes the type of $g$. Then, $g$ is a $r^{th}$ power in $G\wr S_n$ if and only if $r \mid T(g)_{ij}$ whenever $r\mid j$ or, $1\leq i\leq d$.
	\end{proposition}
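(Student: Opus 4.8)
The plan is to prove the two implications separately, obtaining the forward direction essentially for free from Lemma~\ref{preplem3} and the reverse direction by writing down an explicit $r$-th root, prescribed through its type.

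\emph{Forward direction.} Suppose $g=h^r$ with $h\in G\wr S_n$, and fix indices $i,j$ with $r\mid j$ or $1\le i\le d$. If $r\mid j$, then Lemma~\ref{preplem3} gives $T(g)_{ij}=r\,T(h)_{i,rj}$, which is divisible by $r$. If $r\nmid j$ but $1\le i\le d$, I would first note that $\C_i$ being a non-$r$-th-power class forces \emph{no} element of $\C_i$ to be an $r$-th power in $G$ (the set of $r$-th powers of $G$ is closed under conjugation), so there is no index $k$ with $(\C_k)^r=\C_i$. Hence the index set $\{y_1,\dots,y_m\}$ appearing in Lemma~\ref{preplem3} is empty, and again $T(g)_{ij}=r\,T(h)_{i,rj}$ is divisible by $r$.

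\emph{Reverse direction.} Assume $T(g)=(a_{ij})$ satisfies $r\mid a_{ij}$ whenever $r\mid j$ or $1\le i\le d$. Since $\omega_r(G\wr S_n)$ is a union of conjugacy classes and conjugacy classes of $G\wr S_n$ are parametrized by their types (Proposition~\ref{conjinwr}), it suffices to exhibit some $h$ with $T(h^r)=(a_{ij})$: then $h^r$ is conjugate to $g$, and $g$ is an $r$-th power. By the same proposition I may simply prescribe the type of $h$. Fix once and for all, for every $r$-th-power conjugacy class $\C$ of $G$, a class $\theta(\C)$ with $\theta(\C)^r=\C$ (a section of the $r$-th power map on conjugacy classes of $G$; it is automatically injective). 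I then define $h$ by prescribing the following cycles, i.e.\ the corresponding nonzero entries of $T(h)$: (i) for each $i>d$ and each $j$ with $r\nmid j$, take $a_{ij}$ cycles of length $j$ with cycle product in $\theta(\C_i)$; (ii) for each $i\le d$ and each $j$ with $r\nmid j$, take $a_{ij}/r$ cycles of length $rj$ with cycle product in $\C_i$; (iii) for each $i$ and each $j$ with $r\mid j$, take $a_{ij}/r$ cycles of length $rj$ with cycle product in $\C_i$. The divisibility hypothesis is exactly what makes the quotients $a_{ij}/r$ in (ii) and (iii) integers, and the lengths total $\sum_{i,j}j\,a_{ij}=n$, so this is a legitimate type.

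It then remains to compute $T(h^r)$ from Lemma~\ref{preplem3} and check it equals $(a_{ij})$ entry by entry, separating the cases $r\mid j$; $r\nmid j$ with $i\le d$; and $r\nmid j$ with $i>d$. The mechanism is that a length-$j$ cycle in family (i) becomes (since $r\nmid j$) a length-$j$ cycle with cycle product in $\theta(\C_i)^r=\C_i$, while a cycle in family (ii) or (iii), having length divisible by $r$, splits into $r$ cycles of length $j$ with the \emph{same} cycle-product class. The main obstacle, and essentially the only real work, is the bookkeeping that the three families never feed the same entry of $T(h^r)$: this uses that a length $rj$ with $r\nmid j$ has $r$-adic valuation exactly $1$ whereas a length $rj$ with $r\mid j$ has valuation at least $2$ (segregating (ii) from (iii), and both from the $r\nmid j$ lengths of (i)), together with injectivity of $\theta$ to see that, for $i>d$ and $r\nmid j$, only the family-(i) cycles attached to this particular $i$ contribute to $T(h^r)_{ij}$. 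The forward direction requires nothing beyond Lemma~\ref{preplem3}.
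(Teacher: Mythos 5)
Your proposal is correct and follows essentially the same route as the paper: the forward direction reads off Lemma~\ref{preplem3} (with the empty preimage set for the non-power classes $\C_1,\ldots,\C_d$), and the reverse direction builds the same explicit root type the paper does — your section $\theta$ of the $r$-th power map on classes corresponds to the paper's choice of the smallest preimage index $i_1$, and your entries (ii)--(iii) match the paper's matrix $T'$ — before concluding via conjugacy-invariance of $\omega_r(G\wr S_n)$. Your extra bookkeeping (the $r$-adic valuation of cycle lengths and injectivity of $\theta$) just makes explicit the disjointness the paper uses implicitly, so no substantive difference.
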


	\begin{proof}
		Assume that $g \in G\wr S_n$ is a $r^{th}$ power, that is, there exists $h\in G\wr S_n$ such that $h^n=g$. Let $T(h)$ denotes the type of $h$. The type of $h^n$ is given by Lemma~\ref{preplem3}. Since $T(h^n)=T(g)$, we at once conclude that $r\mid T(g)_{i,j}$, whenever $r\mid j$. Since the conjugacy classes of $G$ are labeled as $\C_1,\C_2,\ldots,\C_s$ such that $\C_1,\C_2,\ldots, \C_d$ are not $r^{th}$ powers, we conclude once again by Lemma~\ref{preplem3}, that, for $1\leq i\leq d$, $T(g)_{i,j}=rT(h)_{i,rj}$, thus showing that $r\mid T(g)_{i,j}$. Conversely let us assume that $g\in G\wr S_n$ is such that $r|T(g)_{ij}$ whenever $1\leq i\leq d$ or, $r\mid j$. We need to show that there exists $h\in G\wr S_n$ such that $h^n=g$. For $d+1\leq i\leq s$, let $\C_{i_1}, \C_{i_2},\ldots \C_{i_u}$ be the complete set of classes such that $\C_{i_m}^r=\C_{i}$ for all $1\leq m\leq u$. Moreover we choose the indices $i_m$ in increasing order, that is, $i_1<i_2<\ldots <i_u$. It is clear that for two distinct $e,f \in \{d+1,\ldots, s \}$, the indices $e_1,e_2,\ldots, e_u$ and the indices $f_1,f_2,\ldots, f_{u^{'}}$ are disjoint. Consider the $s\times n$ matrix $T^{'}$ defined as follows:
		
		\noindent When $r\nmid j$,
		\[T^{'}_{ij}=
		\begin{cases}
		T(g)_{i_1,j} & ;\text{ if } d+1\leq i\leq s\\
		0 & ;\text{otherwise}
		\end{cases}
		\]
		and, 
		\[
		T^{'}_{i,rj}=
		\begin{cases}
		\frac{T(g)_{i,j}}{r} & ;\text{ if } 1\leq i\leq d \\
		\frac{T(g)_{i,j}}{r} & ;\text{ if } d+1\leq i\leq s \text{ and, } r\mid j\\
		0 & ; \text{ if } d+1\leq i\leq s \text{ and, } r\nmid j\\
		\end{cases}
		\]

		\noindent In the above definition, we once again use the convention that if the indices $i,j$ exceeds the bound $1\leq i\leq s, 1\leq j\leq n$, we don't consider those entries. Observe that $\sum\limits_{i,j} jT^{'}_{ij}=n$. Indeed,
		
		\begin{equation}\label{sum-check}
		\displaystyle \sum_{i,j}jT^{'}_{ij}=\sum_{i,r\mid j}rjT^{'}_{i,rj}+\sum_{i,r\nmid j}jT^{'}_{i,j}+\sum_{i,r\nmid j} rjT^{'}_{i,rj}
		\end{equation}
		
		\noindent Now, it is clear from the definition of $T^{'}$ that,
		$$\displaystyle \sum_{i,r\mid j}rjT^{'}_{i,rj}=\sum_{i,r\mid j}jT(g)_{i,j}$$
		and,
		\begin{eqnarray*}
		\displaystyle \sum_{i,r\nmid j}jT^{'}_{i,j}+\sum_{i,r\nmid j} rjT^{'}_{i,rj} &=& \sum_{i,r\nmid j}jT^{'}_{i,j}+\sum_{\substack{1\leq i\leq d\\r\nmid j}} rjT^{'}_{i,rj} = \sum_{i,r\nmid j}jT^{'}_{i,j}+\sum_{\substack{1\leq i\leq d \\ r\nmid j}}jT(g)_{i,j} \\ &=& \sum_{\substack{d+1\leq i\leq s \\ r\nmid j}}jT(g)_{i,j}+\sum_{\substack{1\leq i\leq d \\ r\nmid j}}jT(g)_{i,j} =\sum_{i,r\nmid j}jT(g)_{i,j}.
		\end{eqnarray*}
		
		\noindent Thus from Equation~\ref{sum-check}, we get,
		$$\displaystyle \sum_{i,j}jT^{'}_{ij}=\sum_{i,r\mid j}jT(g)_{ij}+\sum_{i,r\nmid j}jT(g)_{ij}=\sum_{i,j}jT(g)_{ij}=n.$$
		Therefore, there exists $h\in G\wr S_n$ such that $T(h)=T^{'}$, where $T(h)$ is the type of $h$. Now, by Lemma~\ref{preplem3}, when $r\mid j$, $T(h^n)_{ij}=rT(h)_{i,rj}=T(g)_{i,j}.$
		When $r\nmid j$, and $1\leq i\leq d$,
		$T(h^n)_{ij}=rT(h)_{i,rj}=T(g)_{ij}.$
		When $r\nmid j$ and $d+1\leq i\leq s$, once again by Lemma~\ref{preplem3}, and the fact that $i_1,\ldots,i_u$ are the complete set of indices such that $\C_{i_m}^r=\C_i$,
		$$\displaystyle T(h^n)_{ij}=\sum_{m=1}^{u} T(h)_{i_m,j}+rT(h)_{i,rj}=T(g)_{ij}$$
		Thus, $T(h^n)=T(g)$ which implies that $h^n$ is conjugate to $g$. Therefore there exists $x\in G\wr S_n$ such that $xh^nx^{-1}=g \implies (xhx^{-1})^n=g$. This completes the proof.
	\end{proof}
	
	\begin{corollary}\label{corr1}
		Let $G$ be a finite group with $r\nmid |G|$. Let $g\in G\wr S_n$, and $T(g)_{s\times n}=(T(g)_{ij})$ denotes the $\mathrm{type}$ of $g$. Then, $g$ is a $r^{th}$ power in $G\wr S_n$ if and only if $r\mid T(g)_{ij}$, whenever $r\mid j$.
	\end{corollary}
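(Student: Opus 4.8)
The plan is to derive this directly from Proposition~\ref{power-in-wreath} by showing that the hypothesis $r\nmid|G|$ forces the integer $d$ of that proposition to be zero, i.e.\ that every conjugacy class of $G$ is an $r^{th}$ power class.

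First I would invoke Lemma~\ref{preplem1}: since $(r,|G|)=1$, the power map $\omega_r\colon G\to G$, $x\mapsto x^r$, is surjective. I then observe that surjectivity at the level of $G$ transfers to the level of conjugacy classes. Indeed, fix a conjugacy class $\C_i$ of $G$ and choose any $y\in\C_i$; by surjectivity there is $x\in G$ with $x^r=y$, and if $x$ lies in the class $\C_j$ then $y\in(\C_j)^r$. Since $(\C_j)^r$ is itself a conjugacy class of $G$ (by the convention fixed just before Lemma~\ref{preplem3}) and it contains the element $y$ of $\C_i$, we conclude $(\C_j)^r=\C_i$. Hence each $\C_i$ is an $r^{th}$ power in $G$, so in the notation of Proposition~\ref{power-in-wreath} we may take $d=0$.

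With $d=0$, the condition ``$r\mid T(g)_{ij}$ whenever $r\mid j$ or $1\leq i\leq d$'' appearing in Proposition~\ref{power-in-wreath} reduces to ``$r\mid T(g)_{ij}$ whenever $r\mid j$'', because no index $i$ satisfies $1\leq i\leq 0$. Substituting this into the proposition yields exactly the claimed equivalence. Since the whole argument is a direct specialization, there is no real obstacle; the only point requiring a moment's care is the passage from surjectivity of $\omega_r$ on $G$ to the statement that every conjugacy class is an $r^{th}$ power class, which is the short argument above.
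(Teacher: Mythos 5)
Your proposal is correct and follows essentially the same route as the paper: invoke Lemma~\ref{preplem1} to get surjectivity of $\omega_r$ on $G$, conclude every conjugacy class of $G$ is an $r^{th}$ power, and set $d=0$ in Proposition~\ref{power-in-wreath}. The only difference is that you spell out the short class-level argument that the paper leaves implicit, which is a harmless elaboration.
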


	\begin{proof}
		Since by Lemma~\ref{preplem1}, all conjugacy classes in $G$ are $r^{th}$ powers, we get the corollary by putting $d=0$ in Proposition~\ref{power-in-wreath} above. 
	\end{proof}
	
	We end this section with some examples.
	
	\begin{example}\label{ex3}
	 Let us consider $G=C_3$ which is the cyclic group of order 3. Consider $C_3\wr S_3$ which is a group of order 162. By Proposition~\ref{conjinwr} of previous section, conjugacy classes in $C_3\wr S_3$ are in one-one correspondence with $3\times 3$ matrices $(a_{ij})$ with non-negative integer entries and satisfying $\sum\limits_{i,j}ja_{ij}=3$. There are 22 conjugacy classes in $C_3\wr S_3$. Let us list down the 22 matrices, that parametrize the conjugacy classes in $C_3\wr S_3$. 
	 \[x_1=
	 \begin{pmatrix}
	 3 & 0 & 0\\0 & 0 & 0\\0 & 0 & 0
	 \end{pmatrix}, x_2=
	 \begin{pmatrix}
	 0 & 0 & 0\\3 & 0 & 0\\0 & 0 & 0
	 \end{pmatrix},
	 x_3=
	 \begin{pmatrix}
	 0 & 0 & 0\\0 & 0 & 0\\3 & 0 & 0
	 \end{pmatrix},
	 x_4=
	 \begin{pmatrix}
	 2 & 0 & 0\\1 & 0 & 0\\0 & 0 & 0
	 \end{pmatrix},
	 \]
	 
	\[ x_5=
	\begin{pmatrix}
	0 & 0 & 0\\2 & 0 & 0\\1 & 0 & 0
	\end{pmatrix},
	x_6=
	\begin{pmatrix}
	2 & 0 & 0\\0 & 0 & 0\\1 & 0 & 0
	\end{pmatrix},
	x_7=
	\begin{pmatrix}
	1 & 0 & 0\\2 & 0 & 0\\0 & 0 & 0
	\end{pmatrix},
	x_8=\begin{pmatrix}
	0 & 0 & 0\\1 & 0 & 0\\2 & 0 & 0
	\end{pmatrix},
	\]
	
	\[ x_9=
	\begin{pmatrix}
	1 & 0 & 0\\0 & 0 & 0\\2 & 0 & 0
	\end{pmatrix},
	x_{10}=
	\begin{pmatrix}
	1 & 0 & 0\\1 & 0 & 0\\1 & 0 & 0
	\end{pmatrix},
	x_{11}=
	\begin{pmatrix}
	1 & 1 & 0\\0 & 0 & 0\\0 & 0 & 0
	\end{pmatrix},
	x_{12}=\begin{pmatrix}
	1 & 0 & 0\\0 & 1 & 0\\0 & 0 & 0
	\end{pmatrix},
	\]
	
	\[ x_{13}=
	\begin{pmatrix}
	1 & 0 & 0\\0 & 0 & 0\\0 & 1 & 0
	\end{pmatrix},
	x_{14}=
	\begin{pmatrix}
	0 & 1 & 0\\1 & 0 & 0\\0 & 0 & 0
	\end{pmatrix},
	x_{15}=
	\begin{pmatrix}
	0 & 0 & 0\\1 & 1 & 0\\0 & 0 & 0
	\end{pmatrix},
	x_{16}=\begin{pmatrix}
	0 & 0 & 0\\1 & 0 & 0\\0 & 1 & 0
	\end{pmatrix},
	\]
	
	\[ x_{17}=
	\begin{pmatrix}
	0 & 1 & 0\\0 & 0 & 0\\1 & 0 & 0
	\end{pmatrix},
	x_{18}=
	\begin{pmatrix}
	0 & 0 & 0\\0 & 1 & 0\\1 & 0 & 0
	\end{pmatrix},
	x_{19}=
	\begin{pmatrix}
	0 & 0 & 0\\0 & 0 & 0\\1 & 1 & 0
	\end{pmatrix},
	x_{20}=\begin{pmatrix}
	0 & 0 & 1\\0 & 0 & 0\\0 & 0 & 0
	\end{pmatrix},
	\]
	\[ x_{21}=
	\begin{pmatrix}
	0 & 0 & 0\\0 & 0 & 1\\0 & 0 & 0
	\end{pmatrix} \text{ and, }
	x_{22}=
	\begin{pmatrix}
	0 & 0 & 0\\0 & 0 & 0\\0 & 0 & 1
	\end{pmatrix}.
	\]
	
	\noindent With this data, let us compute which conjugacy classes are squares (that is, $r=2$). Observe that $|C_3|=3$, which is odd, thus by Corollary~\ref{corr1}, we conclude that the conjugacy classes which are squares are parametrized by the matrices $(a_{ij})_{1\leq i,j\leq 3}$ with $a_{ij}$ being even whenever $j$ is even. Thus, conjugacy classes from $x_1$ to $x_{10}$, and $x_{20}$ to $x_{22}$ are square conjugacy classes. Thus there are 13 conjugacy classes out of 22 conjugacy classes in $C_3\wr S_3$, which are squares. A direct computation shows that exactly 81 elements are squares in $C_3\wr S_3$.
	\end{example}

	\begin{example}\label{ex4}
		Consider the wreath product $S_3\wr S_3$ which is a group of order 1296. Let $\C_1=\{(1 2), (2 3), (1 3)\},\text{ } \C_2=\{e\},\text{ } \C_3=\{(1 2 3), (1 3 2)\}$ be a labelling of the conjugacy classes in $S_3$. Observe that $\C_1$ is the only non-square conjugacy class in $S_3$. Observe again by using Proposition~\ref{conjinwr}, the conjugacy classes in $S_3\wr S_3$ are in one-one correspondence with $3\times 3$ matrices $(a_{ij})$, with non-negative integer entries and $\sum\limits_{i,j}ja_{ij}=3$. This is the same set of matrices as described in the previous example in case $C_3\wr S_3$. Thus, the matrices $x_1$ to $x_{22}$ gives the conjugacy class types of $S_3\wr S_3$ once again. We compute the squares (that is, $r=2$) in $S_3\wr S_3$. Using Proposition~\ref{power-in-wreath}, we conclude that the conjugacy classes in  $S_3\wr S_3$ that are squares, are parametrized by the $3\times 3$ matrices $(a_{ij})$, with the property that $a_{ij}$ is even whenever, either $j=2$, or $i=1$. We conclude that conjugacy classes parametrized by matrices $x_2,x_3,x_4,x_5,x_6,x_8,x_{21},x_{22}$ satisfy the required properties, and hence are square conjugacy classes. Therefore, in $S_3\
		\wr S_3$, 8 conjugacy classes are squares out of possible 22 classes. Once again a direct computation shows that 324 elements out of the total 1296 are squares.
	\end{example}

	\section{Generating functions for the powers in $G\wr S_n$}
	
	In this section, we prove one of our main results, using generating functions. Recall that for the group $G\wr S_n$, we have $P_r(G\wr S_n)=\frac{|\omega_r(G\wr S_n)|}{|G|^nn!}$, where $\omega_r(G\wr S_n)$ is the set of all $r^{th}$ powers in $G\wr S_n$. Let $\alpha_i=\frac{|\C_i|}{|G|}$ for $1\leq i\leq s$, where $\C_1, \C_2, \ldots, \C_s$ denotes a labeling of conjugacy classes of $G$. For $j\geq 1$, define, the function $\varphi_j(u)$ as 
	$$\varphi_j(u)=\exp{\frac{u^{j}}{j}}.$$
	Let $\omega\neq 1$ be a $r^{th}$ root of unity. We define, for $j\geq 1$, and $1\leq i\leq s$,
	
	$$\psi_{j,\alpha_i}(u)=\frac{1}{r}\sum\limits_{k=1}^{r-1}\varphi_j(u)^{\alpha_i\omega^k}.$$
	
	\noindent The following proposition gives the generating function for $P_r(G\wr S_n)$.
	\begin{proposition}\label{mainprobgen}
		Let $G$ be a finite group, and $r\geq 2$ be a prime. Let $\C_1, \C_2, \ldots ,\C_s$ be a labeling of conjugacy classes of $G$ in such a way that $\C_1, \C_2, \ldots, \C_d$ are those conjugacy classes which are not $r^{th}$ powers in $G$ for some $0\leq d<s$. Then,
		
		$$1+\sum\limits_{n=1}^{\infty} P_r(G\wr S_n)u^n=\prod\limits_{i=1}^{d}\left[\prod\limits_{j=1}^{\infty} \psi_{j,\alpha_i}(u)\right]\prod_{i=d+1}^{s}\left[\left(\frac{(1-u^r)^{1/r}}{(1-u)}\right)^{\alpha_i} \prod_{j=1}^{\infty}\psi_{rj,\alpha_i}(u)\right].$$
	\end{proposition}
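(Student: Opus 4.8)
The plan is to turn the combinatorial characterization of $r$-th powers (Proposition~\ref{power-in-wreath}) into a sum over conjugacy-class types, and then recognize that sum as a coefficientwise root-of-unity projection of the cycle-index generating function of Proposition~\ref{cycleindex}. First I would record that, since $\omega_r(G\wr S_n)$ is a union of conjugacy classes and the coefficient of $\prod_{i,j}t_{ij}^{a_{ij}}$ in the cycle polynomial $Z_n$ equals $|Cl(g)|/(|G|^nn!)=1/|\Z(g)|$ for $g$ of type $(a_{ij})$, Proposition~\ref{sizecentra} gives
\[
P_r(G\wr S_n)=\sum_{(a_{ij})\in\mathcal A_n}\ \prod_{i,j}\frac{\alpha_i^{a_{ij}}}{a_{ij}!\,j^{a_{ij}}},
\]
where $\mathcal A_n$ is the set of $s\times n$ matrices of nonnegative integers with $\sum_{i,j}ja_{ij}=n$ subject to the condition of Proposition~\ref{power-in-wreath}: $r\mid a_{ij}$ whenever $r\mid j$ or $1\le i\le d$.

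The key observation is that both the summand $\prod_{i,j}\alpha_i^{a_{ij}}/(a_{ij}!\,j^{a_{ij}})$ and the defining condition decompose as a product, respectively a conjunction, over the individual pairs $(i,j)$, while the grading $n=\sum_{i,j}ja_{ij}$ is additive. Hence the generating function factors as an infinite product of local generating functions,
\[
1+\sum_{n\ge1}P_r(G\wr S_n)u^n=\prod_{i=1}^{s}\ \prod_{j\ge1}L_{ij}(u),\qquad L_{ij}(u)=\sum_{\substack{m\ge0\\ m\in\mathcal S_{ij}}}\frac{\alpha_i^m}{m!\,j^m}\,u^{jm},
\]
with $\mathcal S_{ij}=r\mathbb Z_{\ge0}$ if $r\mid j$ or $i\le d$, and $\mathcal S_{ij}=\mathbb Z_{\ge0}$ otherwise. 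Each $L_{ij}$ has constant term $1$ and next term of degree $\ge j$, so only finitely many factors affect a given coefficient of $u^n$ and the product is a well-defined formal power series; this is precisely the bookkeeping behind Proposition~\ref{cycleindex}, now specialized by sending $t_{ij}$ to $1$ or to a root of unity.

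It then remains to evaluate the two kinds of factor and multiply. For the unconstrained ones, $L_{ij}(u)=\sum_{m\ge0}(\alpha_iu^j/j)^m/m!=\varphi_j(u)^{\alpha_i}$. For the constrained ones, the root-of-unity filter $\frac1r\sum_{k=0}^{r-1}\omega^{km}=[\,r\mid m\,]$ applied to the exponential series gives
\[
L_{ij}(u)=\sum_{\substack{m\ge0\\ r\mid m}}\frac{(\alpha_iu^j/j)^m}{m!}=\frac1r\sum_{k=0}^{r-1}\exp\!\Big(\frac{\alpha_i\omega^k u^j}{j}\Big)=\frac1r\sum_{k=0}^{r-1}\varphi_j(u)^{\alpha_i\omega^k}=\psi_{j,\alpha_i}(u).
\]
Assembling: for $1\le i\le d$ every $j$ is constrained, contributing $\prod_{j\ge1}\psi_{j,\alpha_i}(u)$; for $d+1\le i\le s$ the indices $j=rj'$ contribute $\prod_{j'\ge1}\psi_{rj',\alpha_i}(u)$, while the indices $j$ with $r\nmid j$ contribute
\[
\prod_{j:\,r\nmid j}\varphi_j(u)^{\alpha_i}=\exp\!\Big(\alpha_i\!\!\sum_{j:\,r\nmid j}\frac{u^j}{j}\Big)=\exp\!\Big(\alpha_i\Big(-\log(1-u)+\tfrac1r\log(1-u^r)\Big)\Big)=\Big(\frac{(1-u^r)^{1/r}}{1-u}\Big)^{\alpha_i},
\]
using $\sum_{j\ge1}u^j/j=-\log(1-u)$ and $\sum_{j\ge1}u^{rj}/(rj)=-\tfrac1r\log(1-u^r)$. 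Multiplying all pieces together yields exactly the asserted formula.

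The step I expect to be the crux is the factorization into local generating functions: one must verify that the condition characterizing $r$-th powers is genuinely separable across the entries $a_{ij}$ — which it is, being a conjunction of conditions each involving a single $a_{ij}$ — and that the resulting infinite product is a legitimate formal power series. Once that is in place, the remaining ingredients (the finite root-of-unity filter on the exponential series, and the logarithm identity for $\sum_{r\nmid j}u^j/j$) are entirely routine. As a consistency check one can specialize to $G=\{1\}$, $r=2$, $d=0$ and recover Blum's generating function~\eqref{squares-S_n}.
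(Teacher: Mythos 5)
Your proposal is correct and takes essentially the same route as the paper: both arguments specialize the cycle-index generating function of Proposition~\ref{cycleindex} according to the divisibility constraints of Proposition~\ref{power-in-wreath}, apply the root-of-unity filter to replace $\mathrm{exp}\left(\alpha_i\frac{u^j}{j}\right)$ by $\psi_{j,\alpha_i}(u)$ on the constrained entries, and evaluate $\prod_{r\nmid j}\varphi_j(u)^{\alpha_i}=\left(\frac{(1-u^r)^{1/r}}{1-u}\right)^{\alpha_i}$ via the logarithm identity (your version merely makes the factorization into local factors $L_{ij}$ more explicit). The only slight discrepancy is notational: your filter sums over $k=0,\dots,r-1$, whereas the paper's displayed definition of $\psi_{j,\alpha_i}$ runs over $k=1,\dots,r-1$, which appears to be a typo in the paper since the full sum over all $r$ roots of unity is what the filter requires.
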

	
	\begin{proof}
	 We will make use of the cycle index generating function of $G\wr S_n$ in Proposition~\ref{cycleindex}, and the characterization of $r^{th}$ powers given by Proposition~\ref{power-in-wreath}. By Proposition~\ref{cycleindex}, we have,
	 \begin{equation}\label{cycleindexeq}
	 1+\sum\limits_{n=1}^{\infty}Z_nu^n=\prod_{i=1}^{s}\prod_{j=1}^{\infty}\text{exp}\left(t_{ij}\alpha_i\frac{u^j}{j}\right).
	 \end{equation}	
	 
	\noindent By Proposition~\ref{power-in-wreath}, we know that if $g$ is a $r^{th}$ power in $G\wr S_n$, then each entry in the  first $d$ rows of $T(g)$ is divisible by $r$, where $T(g)=\text{type}(g)$. Therefore, to include the conjugacy classes in $G\wr S_n$, which are $r^{th}$ powers in $G\wr S_n$, for $1\leq i\leq d$, $j\geq 1$ we put $t_{ij}=1$ in Equation~\ref{cycleindexeq}. Now only the coefficients of $u^{mr}$ need to be included in the formal power series expansion of   $\text{exp}(\alpha_i\frac{u^j}{j})$ for each $m\geq 1$. This is achieved when we replace $\text{exp}(\alpha_i\frac{u^j}{j})$ by $\psi_{j,\alpha_i}(u)$. For the rows from $d+1$ to $s$ in $T(g)$, the entries in the $j^{th}$ column are multiples of $r$, whenever $r\mid j$. Thus, for $d+1\leq i\leq s$, and for each $j\geq 1$, we put $t_{ij}=1$, whenever $r\nmid j$. Whenever $r\mid j$, we put $t_{ij}=1$, as well as replace $\text{exp}(\alpha_i\frac{u^j}{j})$ by $\psi_{j,\alpha_i}(u)$ in Equation~\ref{cycleindexeq}, by the same argument as in the previous case. Thus, we get, from equation~\ref{cycleindexeq}, plugging in the above mentioned constraints,

\begin{equation}\label{powerindex}
1+\sum\limits_{n=1}^{\infty}P_r(G\wr S_n)u^n=\prod\limits_{i=1}^{d}\left[\prod\limits_{j=1}^{\infty}\psi_{j,\alpha_i}(u)\right]\prod\limits_{i=d+1}^{s}\left[\prod\limits_{\substack{j=1\\ r\nmid j}}^{\infty}\text{exp}\left(\alpha_i\frac{u^j}{j}\right)\right]\left[\prod\limits_{\substack{j=1\\ r\mid j}}^{\infty}\psi_{j,\alpha_i}(u)\right].
\end{equation}

Now, $$\prod\limits_{\substack{j=1 \\ r\nmid j}}^{\infty}\text{exp}\left(\frac{u^j}{j}\right)=\frac{\prod\limits_{j=1}^{\infty}\text{exp}\left(\frac{u^j}{j}\right)}{\prod\limits_{\substack{j=1 \\ r\mid j}}^{\infty}\text{exp}\left(\frac{u^j}{j}\right)}=\frac{1}{(1-u)\prod\limits_{\substack{j=1 \\ r\mid j}}^{\infty}\text{exp}\left(\frac{u^j}{j}\right)}=\frac{1}{(1-u)\prod\limits_{j=1}^{\infty}\text{exp}\left(\frac{u^{rj}}{rj}\right)}$$$$=\frac{1}{(1-u)\text{exp}\left(-\frac{1}{r}\text{log}(1-u^r)\right)}=\frac{(1-u^r)^{\frac{1}{r}}}{1-u}$$
Thus, the final result follows from the above computation and equation~\ref{powerindex}.
	\end{proof}

	\begin{corollary}\label{probgen-coprime}
		Let $r\geq 2$ be a prime, and $G$ be a finite group with $r\nmid |G|$. Assume that $G$ has $s$ number of conjugacy classes. Then,
		$$1+\sum\limits_{n=1}^{\infty} P_r(G\wr S_n)u^n=\left(\frac{(1-u^r)^{1/r}}{(1-u)}\right)\prod\limits_{i=1}^{s}\prod_{j=1}^{\infty}\psi_{rj,\alpha_i}(u).$$
	\end{corollary}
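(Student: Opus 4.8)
The plan is to obtain this as a direct specialization of Proposition~\ref{mainprobgen} to the case in which every conjugacy class of $G$ is an $r^{th}$ power. First I would invoke Lemma~\ref{preplem1}: since $r$ is coprime to $|G|$, the power map $\omega_r\colon G\to G$, $g\mapsto g^r$, is surjective. Hence for each $i$ there is some $k$ with $(\C_k)^r=\C_i$, so in the notation of Proposition~\ref{mainprobgen} we may take $d=0$, i.e.\ there are no conjugacy classes of $G$ that fail to be $r^{th}$ powers. With $d=0$ the factor $\prod_{i=1}^{d}\bigl[\prod_{j}\psi_{j,\alpha_i}(u)\bigr]$ is an empty product and equals $1$.

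Next I would write out what Proposition~\ref{mainprobgen} gives in this case, namely
$$1+\sum_{n=1}^{\infty}P_r(G\wr S_n)u^n=\prod_{i=1}^{s}\left[\left(\frac{(1-u^r)^{1/r}}{1-u}\right)^{\alpha_i}\prod_{j=1}^{\infty}\psi_{rj,\alpha_i}(u)\right],$$
and then pull the factor $\bigl((1-u^r)^{1/r}/(1-u)\bigr)^{\alpha_i}$ out of the product over $i$, using $\prod_{i=1}^{s}x^{\alpha_i}=x^{\sum_{i}\alpha_i}$. The single observation that makes everything collapse is that $\sum_{i=1}^{s}\alpha_i=\sum_{i=1}^{s}|\C_i|/|G|=1$, since the conjugacy classes $\C_1,\dots,\C_s$ partition $G$. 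Therefore the collected factor is $\bigl((1-u^r)^{1/r}/(1-u)\bigr)^{1}$, and the remaining product is exactly $\prod_{i=1}^{s}\prod_{j=1}^{\infty}\psi_{rj,\alpha_i}(u)$, which is the asserted formula.

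There is no substantial obstacle here: the corollary is a clean specialization of Proposition~\ref{mainprobgen}, and the only mildly non-routine point is recognizing the telescoping of the exponents via $\sum_i|\C_i|=|G|$. In writing it up I would only take care that the empty-product convention for $d=0$ is applied consistently with the statement of Proposition~\ref{mainprobgen}, and note in passing that $\psi_{rj,\alpha_i}(u)$ is well defined for every $i$, being a finite sum of powers of $\exp(u^{rj}/rj)$.
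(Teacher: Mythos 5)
Your proposal is correct and matches the paper's proof: the paper likewise obtains the corollary by setting $d=0$ in Proposition~\ref{mainprobgen} (justified via Lemma~\ref{preplem1}/Corollary~\ref{corr1}) and using $\sum_{i=1}^{s}\alpha_i=1$ to collect the factor $\left(\frac{(1-u^r)^{1/r}}{1-u}\right)^{\alpha_i}$ into a single power. Your write-up simply spells out these same two steps in more detail.
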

	
	\begin{proof}
	Since $\sum\limits_{i=1}^{s}\alpha_i=1$, the proof follows from the above proposition, where we take $d=0$, due to Corollary~\ref{corr1}.
	\end{proof}
	
	\begin{example}
		Let $G=C_3$ be the cyclic group of order 3. Let $r=2$. By the above corollary,
		\begin{equation}
		\displaystyle 1+\sum_{n=1}^{\infty}P_2(C_3\wr S_n)u^n= \left(\frac{1+u}{1-u}\right)^{\frac{1}{2}}\prod_{j\geq 1} \cosh^3{\left(\frac{u^{2j}}{2j}\right)}
		\end{equation}
		where $\cosh^n(x)=(\cosh{x})^n$.
	\end{example}
	
	\begin{example}
		The Weyl group of type $\mathcal{B}_n$ (or, $\mathcal{C}_n$) for $n\geq 2$, is the wreath product $C_2\wr S_n$. Using Corollary~\ref{probgen-coprime}, the generating function for the proportion of squares in these groups can be given as follows,
		\begin{eqnarray*}
			\displaystyle 1+\sum_{n=1}^{\infty} P_2(\mathcal{B}_n)u^n&=&1+\sum_{n=1}^{\infty} P_2(\mathcal{C}_n)u^n \\ &=&\left(\frac{1+u}{1-u}\right)^{\frac{1}{4}}\prod_{j\geq 1}  \cosh{\left(\frac{u^{j}}{2j}\right)}\cosh{\left(\frac{u^{2j}}{4j}\right)}
		\end{eqnarray*}
		where we assume $\mathcal{B}_1=\mathcal{C}_1=C_2$, and $P_2(\mathcal{B}_n)$ (resp, $P_2(\mathcal{C}_n)$) denotes the proportion of squares in $\mathcal{B}_n$ (resp, $\mathcal{C}_n$). 
	\end{example}
		
	\noindent In particular if we take $G=\{1\}$ and $r=2$ in Corollary~\ref{probgen-coprime}, we get back the generating function for the squares in $S_n$ in Equation~\ref{squares-S_n}. 
	
	Now, we proceed towards proving our main theorem. The following lemma which is a generalization of  Lemma 2 in \cite{bl}, will be needed.
	
	\begin{lemma}\label{mainlemma-genfunction}
		Let $r$ be a prime and $\omega\neq 1$ be a $r^{th}$ root of unity. Let $f(x)$ be a  formal power series such that $f(x)=f(\omega x)$. Suppose, $g(x)=(1+x+\cdots+x^{r-1})f(x)=\sum\limits_{n=0}^{\infty}a_nx^n$. Then, $a_{k+1}=a_{k}$ for  $k \not \equiv -1$ (mod $r$).
	\end{lemma}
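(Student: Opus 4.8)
The plan is to use the functional equation $f(x)=f(\omega x)$ to show that $f$ is in fact a power series in $x^{r}$, and then to read off the coefficients of $g$ directly from the factorization $g(x)=(1+x+\cdots+x^{r-1})f(x)$.

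First I would expand $f(x)=\sum_{n\ge 0}b_nx^n$ and compare coefficients in $f(x)=f(\omega x)$, which gives $b_n=\omega^n b_n$, i.e. $(1-\omega^n)b_n=0$ for every $n\ge 0$. Since $r$ is prime and $\omega\neq 1$ is an $r$th root of unity, $\omega$ is a \emph{primitive} $r$th root of unity, so $\omega^n=1$ holds exactly when $r\mid n$. Hence $b_n=0$ whenever $r\nmid n$, and we may write $f(x)=\sum_{q\ge 0}b_{qr}x^{qr}$.

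Next I would multiply by $1+x+\cdots+x^{r-1}$ and collect terms:
$$g(x)=\sum_{q\ge 0}\sum_{j=0}^{r-1}b_{qr}\,x^{qr+j}.$$
Because every integer $n\ge 0$ has a unique representation $n=qr+j$ with $0\le j\le r-1$, this shows $a_n=b_{qr}$ with $q=\lfloor n/r\rfloor$; in other words $a_n$ depends only on $\lfloor n/r\rfloor$. Finally, fixing $k$ with $k\not\equiv -1\pmod r$ and writing $k=qr+j$ with $0\le j\le r-1$, the hypothesis forces $j\le r-2$, so $k+1=qr+(j+1)$ with $0\le j+1\le r-1$; hence $\lfloor(k+1)/r\rfloor=q=\lfloor k/r\rfloor$, and therefore $a_{k+1}=b_{qr}=a_k$.

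I do not expect a genuine obstacle here; the one point requiring care is to invoke the primality of $r$ at the right moment, namely to guarantee that \emph{any} $r$th root of unity other than $1$ is primitive, so that $b_n=0$ for all $n$ with $r\nmid n$ (and not merely for $n$ lying in the subgroup generated by the order of $\omega$). This is the same use of primality of $r$ that already appears in Proposition~\ref{power-in-wreath} and throughout Section~4, and taking $r=2$ recovers Lemma~2 of \cite{bl}.
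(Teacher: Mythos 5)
Your proposal is correct and follows the same route as the paper's proof: expand $f$, use $f(x)=f(\omega x)$ together with the primality of $r$ to conclude $b_n=0$ for $r\nmid n$, and then compare coefficients in $(1+x+\cdots+x^{r-1})f(x)$. The paper states these two steps tersely; you have simply filled in the details (in particular the observation that $a_n$ depends only on $\lfloor n/r\rfloor$), so there is nothing substantively different.
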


	\begin{proof}
		Let $f(x)= \sum\limits_{n=0}^{\infty}b_nx^n$. Since $f(x)=f(\omega x)$, we have, $b_i=0$ for $r\nmid i$. Equating the coefficient of $x^n$  on both sides of $(1+x+\dots+x^{r-1})f(x)=\sum\limits_{n=0}^{\infty}a_nx^n$, we get $a_{k+1}=a_{k}$ for  $k \not \equiv -1$ (mod $r$).                     
	\end{proof}

	\begin{theorem}\label{mainth}
		Let $G$ be a finite group and $r\geq 2$ be a prime, with $(r,|G|)=1$. Then $P_r(G\wr S_{n+1})=P_r(G\wr S_{n})$ for every $n\in \mathbb{N}$, with $n \not\equiv -1(\text{mod } r)$. In other words, $|\omega_r(G\wr S_{n+1})|=|G|(n+1)|\omega_r(G\wr S_n)|$ for every $n\not\equiv -1(\text{mod } r)$.
	\end{theorem}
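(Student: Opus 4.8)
The plan is to read the relation off the generating function of Corollary~\ref{probgen-coprime}, in the spirit of Blum's treatment of $P_2(S_n)$, by putting that generating function into the precise shape to which Lemma~\ref{mainlemma-genfunction} applies. Since $(r,|G|)=1$, Corollary~\ref{probgen-coprime} gives
\[
1+\sum_{n=1}^{\infty}P_r(G\wr S_n)u^n=\frac{(1-u^r)^{1/r}}{1-u}\prod_{i=1}^{s}\prod_{j=1}^{\infty}\psi_{rj,\alpha_i}(u).
\]
First I would rewrite the rational prefactor: from $1-u^r=(1-u)(1+u+\cdots+u^{r-1})$ one gets $\dfrac{(1-u^r)^{1/r}}{1-u}=(1+u+\cdots+u^{r-1})\,(1-u^r)^{(1-r)/r}$, so that the left-hand side takes the form $g(u)=(1+u+\cdots+u^{r-1})f(u)$ with
\[
f(u):=(1-u^r)^{(1-r)/r}\prod_{i=1}^{s}\prod_{j=1}^{\infty}\psi_{rj,\alpha_i}(u).
\]

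Next I would verify that $f(u)=f(\omega u)$ for an $r$-th root of unity $\omega\neq 1$; since $r$ is prime this is equivalent to $f$ being a formal power series in $u^r$. The binomial factor $(1-u^r)^{(1-r)/r}$ is manifestly such a series. For the remaining factors, each $\psi_{rj,\alpha_i}(u)$ is a finite linear combination of series of the shape $\exp\!\big(c\,u^{rj}/(rj)\big)$, every monomial of which has degree a multiple of $rj$, hence of $r$; so each $\psi_{rj,\alpha_i}(u)$ is a power series in $u^r$, and the formally convergent product of all of them is too, whence $f(u)=f(\omega u)$. The only point that needs a moment's care here is that $\alpha_i=|\C_i|/|G|$ is generally not an integer, so $\varphi_{rj}(u)^{\alpha_i}$ must be read as $\exp(\alpha_i u^{rj}/(rj))$ throughout; there is no analytic subtlety.

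Finally I would apply Lemma~\ref{mainlemma-genfunction} to $g(u)=(1+u+\cdots+u^{r-1})f(u)=\sum_{n\geq 0}a_nu^n$, where $a_0=1$ and $a_n=P_r(G\wr S_n)$ for $n\geq 1$: it yields $a_{k+1}=a_k$ whenever $k\not\equiv-1\pmod r$. For $k=0$ this recovers $P_r(G\wr S_1)=P_r(G)=1$ (also immediate from Lemma~\ref{preplem1}), and for $k=n\geq 1$ it gives exactly $P_r(G\wr S_{n+1})=P_r(G\wr S_n)$ for every $n\not\equiv-1\pmod r$. Unwinding $P_r(G\wr S_n)=|\omega_r(G\wr S_n)|/(|G|^n n!)$ then converts this equality of proportions into $|\omega_r(G\wr S_{n+1})|=|G|(n+1)\,|\omega_r(G\wr S_n)|$, which is the second assertion. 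The one step that genuinely uses the hypothesis $(r,|G|)=1$ — and the only step requiring any care — is the verification that $f$ is a power series in $u^r$, since it is precisely coprimality that yields the clean single prefactor $(1-u^r)^{1/r}/(1-u)$ in Corollary~\ref{probgen-coprime}; everything else is formal bookkeeping.
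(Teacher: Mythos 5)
Your proposal is correct and follows essentially the same route as the paper's own proof: rewrite the prefactor in Corollary~\ref{probgen-coprime} as $(1+u+\cdots+u^{r-1})(1-u^r)^{(1-r)/r}$, check that the remaining factor $f(u)$ is invariant under $u\mapsto\omega u$ (equivalently, a power series in $u^r$), and invoke Lemma~\ref{mainlemma-genfunction}. Your justification that each $\psi_{rj,\alpha_i}$ is a series in $u^r$ and the final unwinding of $P_r$ into the count $|\omega_r(G\wr S_{n+1})|=|G|(n+1)|\omega_r(G\wr S_n)|$ match the paper's argument.
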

	\begin{proof}
		We will show that $g(u)=1+\sum\limits_{n=1}^{\infty}P_r(G\wr S_n)u^n$ is such that $g(u)=(1+u+\cdots u^{r-1})f(u)$, for a function $f(u)$ with $f(u)=f(\omega u)$, where $\omega\neq 1$ is a $r^{th}$ root of unity. Then by Lemma ~\ref{mainlemma-genfunction}, the result holds. By Corollary ~\ref{probgen-coprime}, we have,
		
		\begin{equation}\label{maineq}
			g(u)=\left(\frac{(1-u^r)^{1/r}}{(1-u)}\right)\prod\limits_{i=1}^{s}\prod_{j=1}^{\infty}\psi_{rj,\alpha_i}(u).
		\end{equation}
		
		We have,
		\begin{center} $\frac{(1-u^r)^{1/r}}{(1-u)}=\left(\frac{1-u^r}{1-u}\right)(1-u^r)^{\frac{1-r}{r}}=(1+u+\ldots+u^{r-1})(1-u^r)^{\frac{1-r}{r}}$.
		\end{center}
		
	Thus, substituting the above in equation~\ref{maineq}, we get,
	
	$$g(u)=(1+u+\ldots+u^{r-1})(1-u^r)^{\frac{1-r}{r}}\prod_{i=1}^{s}\prod_{j=1}^{\infty}\psi_{rj,\alpha_i}(u).$$
	Take $f(u)=(1-u^r)^{\frac{1-r}{r}}\prod\limits_{i=1}^{s}\prod\limits_{j=1}^{\infty}\psi_{rj,\alpha_i}(u)$.
	Then, $g(u)=(1+u+\cdots +u^{r-1})f(u)$. By the discussion at the beginning of the proof, it remains to prove that $f(u)=f(\omega u)$. Since, $\omega^r=1$, it is clear that $\psi_{j,\alpha_i}(\omega u)=\psi_{j,\alpha_i}(u)$ for all $j\geq 1$ and $1\leq i\leq s$. Finally, 
	$$(1-(\omega u)^r)^{\frac{1-r}{r}}=(1-u^r)^{\frac{1-r}{r}}.$$
	This proves that $f(u)=f(\omega u)$, and the proof follows.
	Finally, by definition, $P_r(G\wr S_{n+1})=P_r(G\wr S_n)$ implies that, $|\omega_r(G\wr S_{n+1})|=|G|(n+1)|\omega_r(G\wr S_n)|$.
	\end{proof}

	\begin{remark}
		If $(r,|G|)=1$, for some finite group $G$, due to Lemma~\ref{preplem1}, the power map $\omega_r:G\to G$ is surjective, and therefore it might naturally seem that $|\omega_r(G\wr S_n)|=|G|^n||\omega_r(S_n)|$. In such a case, the above theorem trivially follows from Theorem~\ref{1}. But we remark that $|\omega_r(G\wr S_n)|=|G|^n|\omega_r(S_n)|$ doesn't hold. As an example, consider $G=C_3$, the cyclic group of order 3. Then the wreath product $C_3\wr S_4$ is a group of order 1944. Using Corollary~\ref{corr1}, taking $r=2$, we see that $|\omega_2(C_3\wr S_4)|=810$. Since $|\omega_2(S_4)|=12$, we have, $|\omega_2(C_3\wr S_4)|=810 \neq 3^4.12=972$.
	\end{remark}
	
	Finally, we give a formula to compute $\mathcal{CC}_r(G\wr S_n)$, which is the number of conjugacy classes in $G\wr S_n$, which are $r^{th}$ powers. Let $\mathcal{CC}(G\wr S_n)$ denote the number of conjugacy classes in $G\wr S_n$. Let the number of conjugacy classes in $G$ be $s$. Then it is known (see 3.8 in \cite{ke}) that,
	
	\begin{equation}\label{numberofconjclass}
	\mathcal{CC}(G\wr S_n)=\sum\limits_{n_1,n_2,\ldots, n_s} p(n_1)p(n_2)\ldots p(n_s)
	\end{equation}
	where $p(t)$ denotes the number of partitions of $t$, and the above sum runs over all ordered $s$-tuples $(n_1,n_2,\ldots ,n_s)$ such that, $n_i\geq 0$ for every $1\leq i\leq s$, and $\sum\limits_{i=1}^{s}n_i=n$.
	The generating function for $\mathcal{CC}(G\wr S_n)$ is easy to write by virtue of the explicit formula for $\mathcal{CC}(G\wr S_n)$. Recall that the generating function for $p(n)$, which is the number of partitions of $n$ is,
	
	\begin{equation}\label{partgen}
	P(u)=1+\sum\limits_{n=1}^{\infty}p(n)u^n=\prod\limits_{i=1}^{\infty}\left(\frac{1}{1-u^i}\right).
	\end{equation}
	
	\noindent By the formula of $\mathcal{CC}(G\wr S_n)$, and using Cauchy product we conclude that,
	\begin{proposition}\label{conjgenfunc}
		$1+\sum\limits_{n=1}^{\infty} \mathcal{CC}(G\wr S_n)u^n=\prod\limits_{i=1}^{\infty} \left(\frac{1}{1-u^i}\right)^s$, where $s$ denotes the number of conjugacy classes of $G$.
	\end{proposition}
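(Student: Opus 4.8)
The plan is to read the identity off directly from the combinatorial formula in Equation~\ref{numberofconjclass}, combined with the generating function \eqref{partgen} for $p(n)$, by means of an $s$-fold Cauchy product in the ring of formal power series $\mathbb{Z}[[u]]$.

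First I would recall the basic fact that for formal power series $A_1(u),\dots,A_s(u)$ with $A_\ell(u)=\sum_{m\geq 0}a^{(\ell)}_m u^m$, the coefficient of $u^n$ in the product $A_1(u)\cdots A_s(u)$ equals $\sum a^{(1)}_{n_1}\cdots a^{(s)}_{n_s}$, the sum running over all ordered $s$-tuples $(n_1,\dots,n_s)$ of non-negative integers with $n_1+\cdots+n_s=n$; this follows by iterating the two-factor Cauchy product. Next I would specialize $A_1=\cdots=A_s=P(u)=1+\sum_{n\geq 1}p(n)u^n$, so that $[u^m]P(u)=p(m)$ with the convention $p(0)=1$. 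Then $[u^n]\,P(u)^s=\sum_{n_1+\cdots+n_s=n}p(n_1)\cdots p(n_s)$, which by Equation~\ref{numberofconjclass} is exactly $\mathcal{CC}(G\wr S_n)$ for $n\geq 1$, while the constant term (forcing $n_i=0$ for all $i$) is $1$. Hence $1+\sum_{n\geq 1}\mathcal{CC}(G\wr S_n)u^n=P(u)^s$, and substituting the Euler product \eqref{partgen} for $P(u)$ gives $\prod_{i\geq 1}(1-u^i)^{-s}$, as claimed.

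There is essentially no obstacle here: everything is an identity of formal power series, so convergence plays no role, and the only point worth a sentence is that the sum in Equation~\ref{numberofconjclass} ranges over \emph{ordered} $s$-tuples, which is precisely what the iterated Cauchy product produces. I would likely also record the underlying bijective reason, which simultaneously re-proves Equation~\ref{numberofconjclass}: by Proposition~\ref{conjinwr} a conjugacy class of $G\wr S_n$ is an $s\times n$ matrix $(a_{ik})$ with $\sum_{i,k}k a_{ik}=n$, equivalently an $s$-tuple of partitions $(\mu^{(1)},\dots,\mu^{(s)})$ with $\sum_{i}|\mu^{(i)}|=n$ in which $\mu^{(i)}$ has $a_{ik}$ parts equal to $k$; tracking the total size by the exponent of $u$ then manifestly yields the product of $s$ copies of $\prod_{i\geq 1}(1-u^i)^{-1}$, since each factor enumerates one partition $\mu^{(i)}$ by its size.
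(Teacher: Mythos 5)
Your proposal is correct and matches the paper's argument: the paper also obtains the identity by applying the ($s$-fold) Cauchy product to $P(u)^s$, using the formula in Equation~\ref{numberofconjclass} together with the Euler product \eqref{partgen} for $P(u)$. The additional bijective remark via Proposition~\ref{conjinwr} is a nice complement but not a departure from the paper's route.
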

	
	 \noindent To write the formula for $\mathcal{CC}_r(G\wr S_n)$, we will need to introduce a notation. Let $p_r(n)$ denote the number of partitions $\lambda=1^{m_1}2^{m_2}\ldots \vdash n$, such that $r \mid m_i$ for each $i\geq 1$. Let $p_{r}^{'}(n)$ denote the number of partitions $\lambda \vdash n$ such that $r \mid m_{ri}$ for each $i\geq 1$. We further assume that $p(0)=p_r(0)=p_r^{'}(0)=1$.
	
	\begin{theorem}\label{powerconjformula}
		Let $r\geq 2$ be a prime and $G$ be a finite group. Suppose, out of the total $s$ number of conjugacy classes of $G$, exactly $d$ of them are not $r^{th}$ powers in $G$. Then,
		\begin{center}
			$\mathcal{CC}_r(G\wr S_n)=\sum\limits_{n_1,n_2,\ldots,n_s}p_r(n_1)\ldots p_r(n_d)p_r^{'}(n_{d+1})\ldots p_r^{'}(n_s)$
		\end{center}
		where, the above sum runs over all ordered $s$-tuples $(n_1,n_2,\ldots ,n_s)$ such that, $n_i\geq 0$ for every $1\leq i\leq s$, and $\sum\limits_{i=1}^{s}n_i=n$.
	\end{theorem}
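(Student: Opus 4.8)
The plan is to combine the characterization of $r^{th}$ power conjugacy classes in Proposition~\ref{power-in-wreath} with the bijective description of conjugacy classes of $G\wr S_n$ coming from Proposition~\ref{conjinwr}. Recall that conjugacy classes of $G\wr S_n$ are parametrized by $s\times n$ matrices $(a_{ik})$ with $a_{ik}\in\mathbb{Z}_{\geq 0}$ and $\sum_{i,k}k\,a_{ik}=n$; reading the matrix row by row, the $i^{th}$ row $(a_{i1},a_{i2},\ldots)$ is exactly a partition of the integer $n_i:=\sum_k k\,a_{ik}$ written in power notation (with $m_k=a_{ik}$), and the constraint is $\sum_{i=1}^s n_i=n$. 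Thus a conjugacy class of $G\wr S_n$ is the same data as an ordered $s$-tuple of partitions $(\lambda^{(1)},\ldots,\lambda^{(s)})$ with $\lambda^{(i)}\vdash n_i$ and $\sum n_i=n$; this is precisely the bookkeeping already used to derive \eqref{numberofconjclass}.

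Next I would translate the condition ``$g$ is an $r^{th}$ power'' into a condition on each row separately. By Proposition~\ref{power-in-wreath}, with the labeling chosen so that $\C_1,\ldots,\C_d$ are the non-$r^{th}$-power classes of $G$, the class of $g$ with type $(a_{ik})$ is an $r^{th}$ power if and only if $r\mid a_{ik}$ whenever $r\mid k$ or $i\leq d$. For a row index $i\leq d$ this says $r\mid a_{ik}$ for \emph{all} $k$, i.e.\ the partition $\lambda^{(i)}$ has every multiplicity divisible by $r$; by definition this row contributes exactly when $\lambda^{(i)}$ is counted by $p_r(n_i)$. For a row index $i>d$ the condition is $r\mid a_{ik}$ whenever $r\mid k$, i.e.\ $r\mid m_{ri}$ for each $i\geq 1$ in the notation of the partition $\lambda^{(i)}$; this row contributes exactly when $\lambda^{(i)}$ is counted by $p_r'(n_i)$. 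Since the condition decouples across rows, the number of $r^{th}$-power conjugacy classes is obtained by summing, over all ways of writing $n=n_1+\cdots+n_s$ with $n_i\geq 0$, the product of the number of admissible partitions in each row, namely $p_r(n_1)\cdots p_r(n_d)\,p_r'(n_{d+1})\cdots p_r'(n_s)$, which is the claimed formula. The convention $p(0)=p_r(0)=p_r'(0)=1$ handles empty rows ($n_i=0$) correctly, since the empty partition vacuously satisfies both divisibility conditions.

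There is no real obstacle here beyond making the row-by-row dictionary precise; the only point requiring a little care is that the divisibility conditions appearing in Proposition~\ref{power-in-wreath} are genuinely independent across the $s$ rows of the type matrix — the constraint ``$r\mid a_{ik}$'' never links two different rows — so that the count factors as a sum of products, exactly as in the derivation of \eqref{numberofconjclass}. One should also double-check that the two auxiliary counting functions have been matched to the correct row ranges: $p_r$ (all multiplicities divisible by $r$) to the first $d$ rows corresponding to non-$r^{th}$-power classes of $G$, and $p_r'$ (multiplicities at positions divisible by $r$ divisible by $r$) to the remaining $s-d$ rows. Once this dictionary is in place the theorem follows immediately.
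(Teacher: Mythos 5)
Your proposal is correct and follows essentially the same route as the paper: invoke Proposition~\ref{power-in-wreath} to describe the $r^{th}$-power classes as type matrices whose first $d$ rows have all entries divisible by $r$ and whose remaining rows have entries divisible by $r$ in columns divisible by $r$, then read each row as a partition of $n_i$ counted by $p_r(n_i)$ or $p_r'(n_i)$ respectively and multiply, summing over all $(n_1,\ldots,n_s)$ with $\sum n_i=n$. This matches the paper's proof, so no further comparison is needed.
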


	\begin{proof}
	 By Proposition~\ref{power-in-wreath}, we can conclude that the conjugacy classes in $G\wr S_n$ that are $r^{th}$ powers are in one-one correspondence, with $s\times n$ matrices $(a_{ij})$, with non-negative integer entries, satisfying, (1) $\sum\limits_{i,j}ja_{ij}=n$, (2) entries in the first $d$ rows are multiple of $r$, (3)entries in the columns which are multiplies of $r$, are divisible by $r$. Since for $1\leq i\leq s$, the entries $(a_{ij})_{1\leq j\leq n}$ in the $i^{th}$ row, gives a partition, of $n_i=\sum\limits_{j=1}^{n}ja_{ij}$, we can conclude that for the each of the first $d$ rows, since all entries are multiples of $r$, each row can be chosen in $p_r(n_i)$ number of ways. For $d+1\leq i \leq s$, we see that a row $(a_{ij})_{1\leq j\leq n}$ once again defines a partition of $n_i$ as explained above, but now this partition has the property that each part which is a multiple of $r$ occurs with multiplicity divisible by $r$. This is made sure by condition (3), of the specified combinatorial data. Thus, for each $d+1\leq i\leq s$, each $i^{th}$ row can be chosen in $p_r^{'}(n_i)$ number of ways. Finally observe that, $\sum\limits_{i=1}^{s}n_i=n$, by condition (1) of the combinatorial data. Thus, by the principle of multiplication, we conclude that the number of combinatorial data that satisfies the three conditions specified above (which is equal to the number of conjugacy classes that are  $r^{th}$ powers) is,
		$$\sum\limits_{n_1,n_2,\ldots,n_s}p_r(n_1)\ldots p_r(n_d)p_r^{'}(n_{d+1})\ldots p_r^{'}(n_s)$$
		where the above sum runs over all ordered pairs $(n_1,n_2,\ldots, n_s)$ satisfying $n_i\geq 0$ for every $1\leq i\leq s$ and $\sum\limits_{i=1}^{s}n_i=n$.
	\end{proof}
	
	\begin{example}
		Let us count the number of conjugacy classes that are squares in $S_3\wr S_3$. So, $r=2$, $n=3$. Since, $S_3$ has a total of 3 conjugacy classes, out of which exactly, two are squares, we have, $s=3, d=1$. We have the following table for $p(n), p_2(n), p_2^{'}(n)$ for $n=1,2,3$.
		
		\begin{table}[ht]
			\begin{tabular}{ |c|c|c|c| } 
				\hline
				$n$ & $p(n)$ & $p_2(n)$ & $p_2^{'}(n)$\\
				\hline
				1 & 1 & 0 & 1\\
				\hline
				2 & 2 & 1 & 1\\
				\hline
				3 & 3 & 0 & 2\\
				\hline
			\end{tabular}
			\vspace{0.1 cm}
			\caption{\label{table 2}$p(n), p_2(n), p_2^{'}(n)$ for $n=1,2,3$}
		\end{table}
	\noindent Using the formula in Theorem~\ref{powerconjformula}, and using the above table we have,
	$$\mathcal{CC}_2(S_3\wr S_3)=p_2(3)p_2^{'}(0)p_2^{'}(0)+p_2(0)p_2^{'}(3)p_2^{'}(0)+p_2(0)p_2^{'}(0)p_2^{'}(3) +p_2(2)p_2^{'}(1)p_2^{'}(0)+$$ $$p_2(2)p_2^{'}(0)p_2^{'}(1)+p_2(1)p_2^{'}(2)p_2^{'}(0)+p_2(1)p_2^{'}(0)p_2^{'}(2)+p_2(0)p_2^{'}(1)p_2^{'}(2)+p_2(0)p_2^{'}(2)p_2^{'}(1) $$$$+p_2(1)p_2^{'}(1)p_2^{'}(1)=8$$

	\noindent which agrees with our direct calculation in Example~\ref{ex3}.
	
	\end{example}

	We write the generating function for $\mathcal{CC}_r(G\wr S_n)$. The generating function for $p_r(n)$ and $p^{'}_r(n)$ is easy to deduce.
	
	\begin{equation}\label{eqnp_r}
		P_r(u)=1+\sum\limits_{n=1}^{\infty}p_r(n)u^n=\prod\limits_{i=1}^{\infty} \frac{1}{1-u^{ri}}=P(u^r).
	\end{equation}
	
	\noindent The last equality comes from Equation~\ref{partgen}.
	\begin{equation}\label{eqnp_r1}
		P_r^{'}(u)=1+\sum\limits_{n=1}^{\infty}p_r^{'}(n)u^n=\prod\limits_{i=1}^{\infty}\frac{1}{(1-u^{ri-1})\ldots(1-u^{ri-(r-1)})(1-u^{r^2i})}.
	\end{equation}
	 
    \noindent Using the formula for $\mathcal{CC}_r(G\wr S_n)$ in the previous theorem, and by Cauchy product of formal power series, we conclude that,
	\begin{proposition}\label{powerconj-genfunction}
		Let $r\geq 2$ be a prime and $G$ be a finite group. Suppose, out of the total $s$ number of conjugacy classes of $G$, exactly $d$ of them are not $r^{th}$ powers. Then, 
		\begin{center}
				$1+\sum\limits_{n=1}^{\infty}\mathcal{CC}_r(G\wr S_n)u^n=P(u^r)^{d}P_r^{'}(u)^{s-d}.$
		\end{center}
	\end{proposition}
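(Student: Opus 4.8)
The plan is to combine the explicit formula for $\mathcal{CC}_r(G\wr S_n)$ from Theorem~\ref{powerconjformula} with a Cauchy product of generating functions, exactly as in the proof of Proposition~\ref{conjgenfunc}. First I would recall that Theorem~\ref{powerconjformula} expresses $\mathcal{CC}_r(G\wr S_n)$ as a sum, over all ordered $s$-tuples $(n_1,\dots,n_s)$ of nonnegative integers with $\sum_{i=1}^s n_i=n$, of the product $p_r(n_1)\cdots p_r(n_d)\,p_r^{'}(n_{d+1})\cdots p_r^{'}(n_s)$. This is precisely the shape of the coefficient of $u^n$ in a product of $d$ copies of the series $\sum_{m\geq 0}p_r(m)u^m$ and $s-d$ copies of the series $\sum_{m\geq 0}p_r^{'}(m)u^m$, once we use the convention $p_r(0)=p_r^{'}(0)=1$ so that the constant terms are handled correctly.

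Next I would invoke Equation~\ref{eqnp_r}, which gives $1+\sum_{n=1}^{\infty}p_r(n)u^n=P(u^r)$, and Equation~\ref{eqnp_r1}, which gives $1+\sum_{n=1}^{\infty}p_r^{'}(n)u^n=P_r^{'}(u)$. Substituting these into the Cauchy product identity immediately yields
$$1+\sum_{n=1}^{\infty}\mathcal{CC}_r(G\wr S_n)u^n = \bigl(P(u^r)\bigr)^{d}\bigl(P_r^{'}(u)\bigr)^{s-d},$$
which is the claimed formula. The only subtlety worth spelling out is the bookkeeping of the $n=0$ term: the left side has constant term $1$, and the right side has constant term $1\cdot 1=1$ since both $P(u^r)$ and $P_r^{'}(u)$ have constant term $1$; the conventions $p_r(0)=p_r^{'}(0)=1$ introduced just before Theorem~\ref{powerconjformula} are exactly what make the degree-$n$ coefficient of the product equal the displayed sum including tuples with some $n_i=0$.

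The main (and essentially only) obstacle is making the Cauchy product step rigorous as an identity of formal power series rather than a formal manipulation: one should note that for each fixed $n$ only finitely many $s$-tuples $(n_1,\dots,n_s)$ contribute, so the coefficient extraction is well-defined and associativity of the product of $s$ formal power series applies. Since $p_r$, $p_r^{'}$, and $p$ are all nonnegative and the generating functions $P(u)$, $P(u^r)$, $P_r^{'}(u)$ converge as formal power series with no convergence issues, no analytic input is needed. Thus the proof is a one-line consequence of Theorem~\ref{powerconjformula} together with Equations~\ref{eqnp_r} and~\ref{eqnp_r1}, and I would present it in that compressed form.
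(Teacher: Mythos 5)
Your proposal is correct and follows exactly the paper's route: the paper also deduces the identity directly from Theorem~\ref{powerconjformula} by taking the Cauchy product of $d$ copies of $P(u^r)$ (via Equation~\ref{eqnp_r}) and $s-d$ copies of $P_r^{'}(u)$ (via Equation~\ref{eqnp_r1}), with the conventions $p_r(0)=p_r^{'}(0)=1$ handling the constant terms. Your added remarks on formal power series bookkeeping are fine but not needed beyond what the paper states.
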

	
	\subsection{Questions on asymptotics of the power map on $G\wr S_n$}
	\noindent We end this section by briefly discussing some asymptotic aspects of the powers in $G\wr S_n$. It is clear that, for $r\geq 2$, 
	\begin{center}
		$|\omega_r(G\wr S_n)|\leq |G|^n|\omega_r(S_n)| \implies P_r(G\wr S_n)\leq \frac{|\omega_r(S_n)|}{n!}=P_r(S_n).$
	\end{center}
	From the above inequality it is clear that for a fixed $G$ and $r\geq 2$ prime, 
	\begin{center}
		$\lim\limits_{n\to \infty}P_r(G\wr S_n)=0$
	\end{center}
	due to Sandwich theorem, and the fact that $\lim\limits_{n\to \infty}P_r(S_n)=0$ (see \cite{bmw}).
	
	\noindent It will be interesting to find estimates for powers in $G\wr S_n$ analogous to those in $S_n$ (see section 2). We finish this paper with an interesting question on the distribution of the values $P_r(G\wr S_n)$  for various $G$, which we observed using some GAP calculations on the powers. 
	
	\begin{question}
		Fix $n\geq 2$. Let $r\geq 2$ be a prime. Suppose $G$ be a finite group, with $(r,|G|)=1$. Is it true that
		\begin{center}
			$P_r(S_{n+1})\leq P_r(G\wr S_n)\leq P_r(S_n)$ 
		\end{center}
		when $n\equiv -1(\text{mod }r)$?
	\end{question}
	
	This means that for a fix $n$, the values $P_r(G\wr S_n)$ are distributed between $P_r(S_{n+1})$ and $P_r(S_n)$. In fact, this compels us to ask another natural question.
	
	\begin{question}
		Does the following result hold: For a fixed $n$, with $n\equiv-1(\text{mod } r)$ and given $\varepsilon >0$, there exists $N\in \mathbb{N}$, such that
		\begin{center}
			$P_r(G\wr S_n)-P_r(S_{n+1}) <\varepsilon$
		\end{center}
		for all $G$, with $|G|>N$ and, $r\nmid |G|$. 
	\end{question}
	This in fact says that as size of $G$ grows larger, the value of $P_r(G\wr S_n)$ converges to $P_r(S_{n+1})$. The fact that every element is a $r^{th}$ power in $G$, when $r\nmid |G|$ in some sense controls the growth of $P_r(G\wr S_n)$, hence giving a strong conviction that the above results can be answered in positive. 
	
	\begin{remark}
		The above result, definitely doesn't hold if we remove the condition that $r\nmid |G|$. Suppose $G=C_2$, the cyclic group of order 2. Consider $r=2$. Take $n=3$. We have, $P_2(S_3)=\frac{1}{2}=P_2(S_4)$. But, $P_2(C_2\wr S_3)=\frac{1}{4}<\frac{1}{2}$. 
	\end{remark}

\end{document}